\documentclass[a4paper,11pt,reqno]{amsart}
\usepackage{amsmath}
\usepackage{amstext}
\usepackage{amsbsy}
\usepackage{amsopn}
\usepackage{upref}
\usepackage{amsthm}
\usepackage{amsfonts}
\usepackage{amssymb}
\usepackage{mathrsfs}
\usepackage{times}
\allowdisplaybreaks
%%%%%%
%%%%%% 
%%%%%%
%\setlength{\columnsep}{20mm}
\setlength{\textheight}{225mm}
\setlength{\textwidth}{156mm}
\setlength{\oddsidemargin}{4mm}
\setlength{\evensidemargin}{4mm}
\setlength{\topmargin}{2mm}
%\setlength{\footskip}{10mm}
%\setlength{\baselineskip}{14pt}
%\setlength{\parindent}{24pt}
%\pagestyle{empty}
%\numberwithin{equation}{section}
%%%%%%
%%%%%% Theorems
%%%%%%
%\theoremstyle{plain}
 \newtheorem{theorem}{Theorem}[section]

 \newtheorem{lemma}[theorem]{Lemma}
\theoremstyle{definition}
 
\theoremstyle{remark}
 
%%%%%%
%%%%%% tool
%%%%%%

\newcommand{\p}{\partial}

\newcommand{\re}{\operatorname{Re}}
\newcommand{\im}{\operatorname{Im}}
%%%%%%
%%%%%% Beginning Document
%%%%%%
\begin{document}
%%%%%%
%%%%%% Make Title
%%%%%%
\title[Resolvent estimates and dispersive equations]
      {Resolvent estimates related with 
       \\
       a class of dispersive equations}
\author[H.~Chihara]{Hiroyuki CHIHARA}
\address{Mathematical Institute,  
         Tohoku University, 
         Sendai 980-8578, Japan}
\email{chihara@math.tohoku.ac.jp}
\subjclass[2000]{Primary 47A10; Secondary 35P25, 47F05}
\thanks{The author was supported by 
the JSPS Grant-in-Aid for Scientific Research \#17540140.}
%\date{} 
%\dedicatory{}
\keywords{resolvent, dispersive equation, smoothing effect, 
limiting absorption principle}
\begin{abstract}
We present a simple proof of 
the resolvent estimates of 
elliptic Fourier multipliers 
on the Euclidean space, 
and apply them to the analysis of 
time-global and spatially-local smoothing estimates 
of a class of dispersive equations. 
For this purpose we study in detail the properties of 
the restriction of Fourier transform on 
the unit cotangent sphere associated 
with the symbols of multipliers. 
\end{abstract}
\maketitle
%%%%%%
%%%%%% Document
%%%%%%
\section{Introduction}
\label{section:introduction}
This paper is concerned with resolvent estimates of elliptic operators 
on the Euclidean space with constant coefficients. These estimates are
equivalent to smoothing properties of solutions to corresponding
dispersive evolution equations. 
\par
For 
$x=(x_1,\dotsc,x_n)\in\mathbb{R}^n$ 
and 
$\xi=(\xi_1,\dotsc,\xi_n)\in\mathbb{R}^n$, 
set $x\cdot\xi=x_1\xi_1+\dotsb+x_n\xi_n$ and 
$\lvert\xi\rvert=\sqrt{\xi\cdot\xi}$. 
Let 
$a(\xi){\in}C(\mathbb{R}^n){\cap}C^\infty(\mathbb{R}^n\setminus\{0\})$
be a positively homogeneous function of degree one. 
Suppose that $a(\xi)>0$ for $\xi\ne0$. 
It follows that 
$$
a^\prime(\xi)
=
\left(
\frac{\p{a}}{\p\xi_1}(\xi),\dotsc,\frac{\p{a}}{\p\xi_n}(\xi)
\right)
\ne0
\quad\text{for}\quad
\xi\ne0
$$
since $a(\xi)=a^\prime(\xi)\cdot\xi$. 
Set $p(\xi)=a(\xi)^m$ for some fixed number $m>1$. 
\par
Consider the initial value problem of the form 
\begin{alignat}{2}
  D_tu-p(D_x)u
& =f(t,x) 
& 
  \quad\text{in}\quad
& \mathbb{R}^{1+n},
\label{equation:pde} 
\\
  u(0,x)
& =
  \phi(x)
& 
  \quad\text{in}\quad
& \mathbb{R}^{n},
\label{equation:data} 
\end{alignat}
where $u(t,x)$ is an unknown function of 
$(t,x)=(t,x_1,\dotsc,x_n)\in\mathbb{R}^{1+n}$, 
$f(t,x)$ and $\phi(x)$ are given functions, 
$i=\sqrt{-1}$, 
$D_t=-i\p/\p{t}$, 
$D_x=-i\p/\p{x}$, 
$\p/\p{x}=(\p/\p{x_1},\dotsc,\p/\p{x_n})$, 
and the operator $p(D_x)$ is defined by 
$$
p(D_x)v(x)
=
\frac{1}{(2\pi)^n}
\iint_{\mathbb{R}^n\times\mathbb{R}^n}
e^{i(x-y)\cdot\xi}p(\xi)v(y)
dyd\xi
$$
for an appropriate function $v(x)$. 
Since $p(\xi)$ is real-valued, 
the initial value problem 
\eqref{equation:pde}-\eqref{equation:data} is $L^2$-well-posed, 
that is, 
for any $\phi{\in}L^2(\mathbb{R}^n)$ 
and for any $f{\in}L^1_{\text{loc}}(\mathbb{R};L^2(\mathbb{R}^n))$, 
\eqref{equation:pde}-\eqref{equation:data} possesses a unique solution 
$u{\in}C(\mathbb{R};L^2(\mathbb{R}^n))$. 
Here $L^q$ and $L^q_{\text{loc}}$ denote a usual Lebesgue space and 
its local space respectively for $q\in[1,\infty]$, 
and $C(\mathbb{R};L^2(\mathbb{R}^n))$ is the set of all 
$L^2(\mathbb{R}^n)$-valued continuous functions on $\mathbb{R}$.  
Moreover, the unique solution $u$ is explicitly given by 
\begin{align*}
  u(t,x)
& =
  e^{itp(D_x)}\phi(x)+iGf(t,x),
\\
  e^{itp(D_x)}\phi(x)
& =
  \frac{1}{(2\pi)^n}
  \iint_{\mathbb{R}^n\times\mathbb{R}^n}
  e^{i(x-y)\cdot\xi}e^{itp(\xi)}v(y)
  dyd\xi,
\\
  Gf(t,x)
& =
  \int_0^t
  e^{i(t-s)p(D_x)}f(s,x)
  ds.
\end{align*}
\par
A typical example of \eqref{equation:pde} is the Schr\"odinger evolution 
equation of a free particle, which is the case $p(\xi)=\lvert\xi\rvert^2$. 
It is well-known that the solution to the free Schr\"odinger evolution
equation on $\mathbb{R}^n$ gains extra smoothness in comparison with the
initial data and the forcing term. 
This mathematical phenomenon is called local smoothing effect or local
smoothing property. 
In the last two decades, smoothing properties of solutions to more
general dispersive partial differential equations and 
their applications have been vigorously investigated. 
See, e.g.,  
\cite{chihara}, 
\cite{doi}, 
\cite{hoshiro1}, 
\cite{hoshiro2}, 
\cite{hoshiro3}, 
\cite{ky}, 
\cite{kpv}, 
\cite{lp}, 
\cite{rs}, 
\cite{sugimoto}, 
\cite{walther}, 
\cite{watanabe} 
and references therein. 
\par
In \cite{doi} Doi deeply studied the relationship between the behavior
of the geodesic flow and the smoothing effect of the Schr\"odinger
evolution equation on complete Riemannian manifolds. 
Roughly speaking, he proved that the smoothing effect occurs if and only
if all the geodesics go to ``infinity''. 
In other words, if there exists a trapped geodesic, then the smoothing
effect breaks down. 
For more general dispersive equations, the smoothing effect depends on
the behavior of the Hamilton flow generated by the principal symbol of 
the equations. 
Consider dispersive equations with constant coefficients of the form
\begin{equation}
D_tu-q(D_x)u=f(t,x)
\quad\text{in}\quad
\mathbb{R}^{1+n}, 
\label{equation:hoshiro}
\end{equation}
where $q(\xi)$ is a real polynomial of order $m>1$. 
Let $q_m(\xi)$ be the principal symbol of $q(D_x)$. 
$q_m(\xi)$ generates the Hamilton flow 
$\{(x+tq_m^\prime(\xi),\xi)\}_{t\in\mathbb{R}}$ 
for 
$(x,\xi)\in\mathbb{R}^n\times(\mathbb{R}^n\setminus\{0\})$. 
Hoshiro proved that the smoothing effect of solutions
to the IVP for \eqref{equation:hoshiro} occurs if and only if for $\xi\ne0$
$$
\lvert{x+tq_m^\prime(\xi)}\rvert 
\longrightarrow 
\infty 
\quad\text{as}\quad
t \rightarrow \pm\infty.
$$
This condition is equivalent to $q_m^\prime(\xi)\ne0$ for $\xi\ne0$. 
See \cite{hoshiro3} for the detail. 
\par
There are some expressions of smoothing estimates. 
Let 
$\Delta=-\lvert{D_x}\rvert^2$ 
and 
$\langle{x}\rangle=\sqrt{1+\lvert{x}\rvert^2}$. 
Throughout this paper, different positive constants are denoted by the
same letter $C$. 
In \cite{sugimoto} Sugimoto classified these estimates into three types 
as follows. 
\begin{description}
\item[TYPE-I\ ] 
Let $\delta>1/2$. 
Then 
\begin{align}
  \lVert
  \langle{x}\rangle^{-\delta}
  \lvert{D_x}\rvert^{1/2}
  e^{-it\Delta}\phi
  \rVert_{L^2(\mathbb{R}^{1+n})}
& \leqslant
  C
  \lVert\phi\rVert_{L^2(\mathbb{R}^n)},
\label{equation:sugimoto1}
\\
  \left\lVert
  \langle{x}\rangle^{-\delta}
  \lvert{D_x}\rvert
  \int_0^t
  e^{-i(t-s)\Delta}f(s)
  ds
  \right\rVert_{L^2(\mathbb{R}^{1+n})}
& \leqslant
  C
  \lVert
  \langle{x}\rangle^{\delta}f
  \rVert_{L^2(\mathbb{R}^{1+n})}.
\label{equation:sugimoto2}
\end{align}
\item[TYPE-II\ ] 
Suppose $n\geqslant3$. Then 
\begin{align}
  \lVert
  \langle{x}\rangle^{-1}
  \langle{D_x}\rangle^{1/2}
  e^{-it\Delta}\phi
  \rVert_{L^2(\mathbb{R}^{1+n})}
& \leqslant
  C
  \lVert\phi\rVert_{L^2(\mathbb{R}^n)},
\label{equation:sugimoto3}
\\
  \left\lVert
  \langle{x}\rangle^{-1}
  \langle{D_x}\rangle
  \int_0^t
  e^{-i(t-s)\Delta}f(s)
  ds
  \right\rVert_{L^2(\mathbb{R}^{1+n})}
& \leqslant
  C
  \lVert
  \langle{x}\rangle{f}
  \rVert_{L^2(\mathbb{R}^{1+n})}.
\label{equation:sugimoto4}
\end{align}
\end{description}
For TYPE-III, see \cite{ky}, \cite{sugimoto} and \cite{watanabe} for the detail. 
TYPE-I estimates 
\eqref{equation:sugimoto1} 
and 
\eqref{equation:sugimoto2} 
were studied by many authors. 
These inequalities describe time-global and spatially local smoothing effect. 
TYPE-II estimates 
\eqref{equation:sugimoto3} 
and 
\eqref{equation:sugimoto4},  
and TYPE-III estimates were 
first studied by Kato and Yajima in \cite{ky}. 
These inequalities seem to show 
not only smoothing effect 
but also integrability of solutions. 
In \cite{chihara} the author gave the complete generalization of 
\eqref{equation:sugimoto1} and \eqref{equation:sugimoto2}. 
More precisely, he proved that if $p(\xi)$ is a 
real-principal-type homogeneous symbol of degree $m>1$, that is, 
$p(\xi){\in}C^1(\mathbb{R}^n){\cap}C^\infty(\mathbb{R}^n\setminus\{0\})$ 
is real-valued and satisfies 
$$
p^\prime(\xi)\ne0, 
\quad
p(\xi)
=
\lvert\xi\rvert^m
p\left(\frac{\xi}{\lvert\xi\rvert}\right) 
\quad\text{for}\quad
\xi\ne0, 
$$
then the corresponding TYPE-I estimates hold. 
Unfortunately, the proof of this generalization in \cite{chihara} 
is complicated and not comprehensive. 
On the other hand, TYPE-II estimates seem to show the integrability of
solutions related with low-frequency region also. 
From a point of view of Fourier analysis, 
it is very natural to ask 
whether the curvature effect of the level set of
$a(\xi)=\lvert\xi\rvert$ is essential or not. 
\par
There are two purposes of this paper. 
One is to give a simpler proof of the generalization of TYPE-I estimates 
in \cite{chihara} for general elliptic symbols. 
It seems to be very hard to present essentially simpler proof for
more general real-principal-type symbols including nonelliptic ones. 
Another is to consider the influence of the curvature effect of 
the level set of the elliptic symbol on TYPE-II estimates. 
More precisely, 
we will give the complete generalization of TYPE-II estimates 
\eqref{equation:sugimoto3} 
and 
\eqref{equation:sugimoto4}, 
and show that they depend only on 
$p^\prime(\xi)$ and the weight $\langle{x}\rangle^{-m/2}$, 
and are independent of the curvature of the level set of $p(\xi)$. 
Our results are the following.   
\begin{theorem}
\label{theorem:main}
Let $n\geqslant2$. 
\begin{description}
\item[TYPE-I]
Suppose $m>1$ and $\delta>1/2$. Then 
\begin{align}
  \lVert
  \langle{x}\rangle^{-\delta}
  \lvert{D_x}\rvert^{(m-1)/2}
  e^{itp(D_x)}\phi
  \rVert_{L^2(\mathbb{R}^{1+n})}
& \leqslant
  C
  \lVert\phi\rVert_{L^2(\mathbb{R}^n)},
\label{equation:chihara1}
\\
  \lVert
  \langle{x}\rangle^{-\delta}
  \lvert{D_x}\rvert^{m-1}
  Gf
  \rVert_{L^2(\mathbb{R}^{1+n})}
& \leqslant
  C
  \lVert
  \langle{x}\rangle^{\delta}f
  \rVert_{L^2(\mathbb{R}^{1+n})}.
\label{equation:chihara2}
\end{align}
\item[TYPE-II]
Suppose $1<m<n$. Then 
\begin{align}
  \lVert
  \langle{x}\rangle^{-m/2}
  \langle{D_x}\rangle^{(m-1)/2}
  e^{itp(D_x)}\phi
  \rVert_{L^2(\mathbb{R}^{1+n})}
& \leqslant
  C
  \lVert\phi\rVert_{L^2(\mathbb{R}^n)},
\label{equation:chihara3}
\\
  \lVert
  \langle{x}\rangle^{-m/2}
  \langle{D_x}\rangle^{m-1}
  Gf
  \rVert_{L^2(\mathbb{R}^{1+n})}
& \leqslant
  C
  \lVert
  \langle{x}\rangle^{m/2}f
  \rVert_{L^2(\mathbb{R}^{1+n})}.
\label{equation:chihara4}
\end{align}
\end{description} 
\end{theorem}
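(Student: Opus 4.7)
The plan is to deduce both TYPE-I and TYPE-II from uniform weighted resolvent estimates for the self-adjoint elliptic Fourier multiplier $A=p(D_x)$, as the title of the paper suggests. Set
\[
B_{\mathrm{I}}=\langle x\rangle^{-\delta}|D_x|^{(m-1)/2},
\qquad
B_{\mathrm{II}}=\langle x\rangle^{-m/2}\langle D_x\rangle^{(m-1)/2}.
\]
By Kato's smooth-operator theorem (equivalently the $TT^{\ast}$ argument for $e^{itA}$), the homogeneous estimates \eqref{equation:chihara1} and \eqref{equation:chihara3} are equivalent to
\[
\sup_{\lambda\in\mathbb{R},\,\varepsilon>0}
\bigl\|B(A-\lambda-i\varepsilon)^{-1}B^{\ast}\bigr\|<\infty
\]
with $B=B_{\mathrm{I}}$ and $B=B_{\mathrm{II}}$ respectively, and the inhomogeneous estimates \eqref{equation:chihara2} and \eqref{equation:chihara4} then fall out of the same resolvent bound by a standard Christ--Kiselev / duality argument applied to the Duhamel representation of $Gf$. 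This reduction is the only soft input.

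Next I would exploit the positive homogeneity $p(\mu^{1/m}\xi)=\mu\,p(\xi)$ for $\mu>0$. A suitable dilation in $x$, combined with the positivity of $p$, reduces the supremum over $\lambda\in\mathbb{R}$ to the single value $\lambda=1$; for $\lambda\leqslant 0$ the resolvent is an $L^{2}$-bounded multiplier and one only needs to control the weights. The Sokhotski--Plemelj formula concentrates the singular part of $(A-1-i0)^{-1}$ on the unit cotangent surface $S=\{\xi:a(\xi)=1\}$, and the coarea formula gives
\[
\langle\operatorname{Im}(A-1-i0)^{-1}v,v\rangle
=
\pi\!\!\int_{S}\!\!\frac{|\widehat{v}(\xi)|^{2}}{|p^\prime(\xi)|}\,d\sigma(\xi).
\]
Thus the resolvent bounds reduce to weighted $L^{2}$-restriction estimates for $\widehat{v}$ on the compact smooth hypersurface $S$, the real part being absorbed into the same estimates by a principal-value argument off $S$.

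For TYPE-I the key restriction inequality is the Agmon--H\"ormander trace lemma
\[
\|\widehat{v}\|_{L^{2}(S)}\leqslant C\|\langle x\rangle^{\delta}v\|_{L^{2}(\mathbb{R}^n)},
\qquad\delta>1/2,
\]
which holds for any smooth compact hypersurface with nowhere vanishing normal; the ellipticity hypothesis $a^\prime\ne0$ on $S$ is exactly what is required and no curvature hypothesis enters. This would yield a short proof of \eqref{equation:chihara1}--\eqref{equation:chihara2}, cleaner than the one in \cite{chihara}. For TYPE-II I would instead establish the Hardy-type weighted restriction estimate
\[
\|\widehat{v}\|_{L^{2}(S)}\leqslant C\|\,|x|^{m/2}v\|_{L^{2}(\mathbb{R}^{n})},
\]
whose dual form is $L^{2}$-boundedness of the extension operator $g\mapsto\int_{S}e^{ix\cdot\xi}g(\xi)\,d\sigma(\xi)$ from $L^{2}(S)$ into $L^{2}(|x|^{-m}dx)$. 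Via Plancherel this is a Stein--Weiss inequality with the power weight $|x|^{-m}$, and the condition $1<m<n$ is exactly what makes this weight admissible. Replacement of $|x|^{\pm m/2}$ by $\langle x\rangle^{\pm m/2}$ and of $|D_x|^{(m-1)/2}$ by $\langle D_x\rangle^{(m-1)/2}$ is a routine low-frequency adjustment using the Hardy inequality, which also accounts for the requirement $m>1$.

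The main obstacle is the TYPE-II weighted restriction inequality: one must show that the only surviving ingredients of $S$ are $p^\prime$ and its dimension, with no dependence on the principal curvatures, and must pin down the sharp range $1<m<n$ forced by Stein--Weiss. This matches the claim in the introduction that TYPE-II depends only on $p^\prime(\xi)$ and the weight $\langle x\rangle^{-m/2}$, and it is the reason the abstract advertises a detailed study of the Fourier restriction to $\{a(\xi)=1\}$.
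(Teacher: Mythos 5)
Your overall route -- reduce the space--time estimates to uniform two--sided resolvent bounds for $p(D_x)$, then reduce those, via Plancherel, the co-area formula and the Sokhotski--Plemelj decomposition, to weighted $L^2$ restriction estimates on the level sets of $a$ -- is exactly the paper's architecture: the reduction of Theorem~\ref{theorem:main} to Theorem~\ref{theorem:resolvent} is carried out in the introduction, and Theorem~\ref{theorem:resolvent} is in turn reduced to Lemma~\ref{theorem:restriction}. However, three of your steps have genuine gaps. First, the inhomogeneous estimates \eqref{equation:chihara2} and \eqref{equation:chihara4} cannot be obtained by ``a standard Christ--Kiselev argument'': the Christ--Kiselev lemma requires the time exponent of the target to exceed that of the source and fails at the $L^2_tL^2_x\to L^2_tL^2_x$ endpoint, which is the case here. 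The paper instead writes $f=f_++f_-$ with $f_\pm=Y(\pm t)f$ and uses $\widetilde{Gf}=\tilde f_+/(\tau-p-i0)+\tilde f_-/(\tau-p+i0)$, so that the time truncation is absorbed precisely because the resolvent bound is uniform over \emph{both} half-planes; duality alone only controls the untruncated integral $\int_{\mathbb{R}}$.

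Second, the proposed dilation reducing $\sup_{\lambda}$ to $\lambda=1$ is incompatible with the inhomogeneous weights $\langle x\rangle^{-\delta}$ and $\langle x\rangle^{-m/2}$, which are not scale-invariant; the paper replaces this by proving the trace estimate \eqref{equation:trace} \emph{uniformly in} $\tau$ (the graphs $g_{\tau,k}$ have $\tau$-independent gradient bounds by homogeneity) and, for TYPE-II, by proving the low-frequency decay \eqref{equation:low-trace} $\lVert\hat f\rVert_{L^2(\Sigma(\tau))}\leqslant C\tau^\theta\lVert\langle x\rangle^{1/2+\theta}f\rVert$, which is what compensates the blow-up of the co-area factor $1/\lvert p^\prime(\xi)\rvert\sim\lvert\xi\rvert^{-(m-1)}$ near the origin; a single restriction bound on $\Sigma(1)$ does not suffice. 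Third, and most importantly, the trace estimate controls only the \emph{imaginary} part of the resolvent (the Poisson-kernel part). For the real part, the cancellation $\int\frac{\lambda-\tau}{(\lambda-\tau)^2+\eta^2}\,d\tau=0$ over intervals symmetric about $\lambda$ is useless unless one knows that $\tau\mapsto\tau^{(n-1)/2}\hat f(\tau\cdot)$ is H\"older continuous in $L^2(\Sigma(1))$ with norm controlled by $\lVert\langle x\rangle^{1/2+\theta}f\rVert$; this is the estimate \eqref{equation:hoelder}, proved via the commutator bounds of Section~2, and it is a separate, essential ingredient that your ``principal-value argument off $S$'' does not supply. Your identification of the Stein--Weiss condition $m<n$ as the source of the TYPE-II restriction is correct in spirit, though it enters through the Hardy-type bound $\lVert a(\xi)^{-m/2}\hat f\rVert\leqslant C\lVert\,\lvert x\rvert^{m/2}f\rVert$ rather than through a weighted bound for the extension operator.
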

Theorem~\ref{theorem:main} follows from resolvent estimates. 
\begin{theorem}
\label{theorem:resolvent}
Let $n\geqslant2$. 
\begin{description}
\item[TYPE-I]
Suppose $m>1$ and $\delta>1/2$. Then 
\begin{equation}
\sup_{\zeta\in\mathbb{C}\setminus\mathbb{R}}
\Bigl\lvert
\Bigl(
\lvert{D_x}\rvert^{m-1}(\zeta-p(D_x))^{-1}f,g
\Bigr)_{L^2(\mathbb{R}^n)}
\Bigr\rvert
\leqslant
C
\lVert
\langle{x}\rangle^{\delta}f
\rVert_{L^2(\mathbb{R}^n)}
\lVert
\langle{x}\rangle^{\delta}g
\rVert_{L^2(\mathbb{R}^n)}.
\label{equation:resolvent1} 
\end{equation}
\item[TYPE-II]
Suppose $1<m<n$. Then 
\begin{equation}
\sup_{\zeta\in\mathbb{C}\setminus\mathbb{R}}
\Bigl\lvert
\Bigl(
\langle{D_x}\rangle^{m-1}(\zeta-p(D_x))^{-1}f,g
\Bigr)_{L^2(\mathbb{R}^n)}
\Bigr\rvert
\leqslant
C
\lVert
\langle{x}\rangle^{m/2}f
\rVert_{L^2(\mathbb{R}^n)}
\lVert
\langle{x}\rangle^{m/2}g
\rVert_{L^2(\mathbb{R}^n)}.
\label{equation:resolvent2} 
\end{equation}
\end{description}
\end{theorem}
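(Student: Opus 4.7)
The plan is to pass to the Fourier side via Parseval and reduce the resolvent to a one-dimensional Cauchy-type integral adapted to the level sets of $p=a^m$. For $\zeta\notin\mathbb{R}$, Plancherel yields
\begin{equation*}
\bigl(A(D_x)(\zeta-p(D_x))^{-1}f,g\bigr)_{L^2(\mathbb{R}^n)}
=\int_{\mathbb{R}^n}\frac{A(\xi)\,\hat{f}(\xi)\,\overline{\hat{g}(\xi)}}{\zeta-a(\xi)^m}\,d\xi,
\end{equation*}
where $A(\xi)=|\xi|^{m-1}$ for TYPE-I and $A(\xi)=\langle\xi\rangle^{m-1}$ for TYPE-II. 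Next I would introduce polar coordinates adapted to the symbol: $\xi=\rho\omega$ with $\rho=a(\xi)>0$ and $\omega\in\Sigma:=\{a=1\}$. By ellipticity of $a$ together with Euler's identity $\omega\cdot a'(\omega)=1$, one has $d\xi=\rho^{n-1}\,d\rho\,d\sigma(\omega)/|a'(\omega)|$, and the substitution $\tau=\rho^m$ reduces the integral to the Cauchy-type form
\begin{equation*}
\int_0^\infty\frac{\Phi_{f,g}(\tau)}{\zeta-\tau}\,d\tau,
\end{equation*}
where the ``spectral density'' $\Phi_{f,g}(\tau)$ is a bilinear pairing of the restrictions of $\hat{f}$ and $\hat{g}$ to the scaled elliptic sphere $\tau^{1/m}\Sigma$.

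The heart of the proof is to establish two facts about $\Phi_{f,g}$. First, an Agmon--H\"ormander type trace estimate on $\Sigma$,
\begin{equation*}
\sup_{\rho>0}\,\rho^{n-1}\int_\Sigma|\hat{f}(\rho\omega)|^2\,d\sigma(\omega)
\leq C\,\|\langle x\rangle^\delta f\|_{L^2}^2\qquad(\delta>1/2),
\end{equation*}
yielding $\|\Phi_{f,g}\|_{L^\infty}$ bounded by the weighted norms on the right of \eqref{equation:resolvent1}. Second, a H\"older-continuity estimate on $\tau\mapsto\Phi_{f,g}(\tau)$, obtained by differentiating the integrand in $\rho$ (which essentially replaces $f$ by $\omega\cdot x\,f$) and interpolating with the trace estimate. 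Together with $\Phi_{f,g}\in L^1(\mathbb{R}_+)$ (which follows directly from Parseval), these imply a uniform bound in $\zeta$ on the Cauchy integral: its imaginary part, a Poisson-type integral of $\Phi_{f,g}$, is controlled by $\|\Phi_{f,g}\|_{L^\infty}$, and its real part, a regularized Hilbert transform, is controlled by the H\"older and $L^1$ norms. Ellipticity makes the polar change of variables smooth, and no curvature assumption on $\Sigma$ enters.

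For TYPE-II the same scheme applies with $A(\xi)=\langle\xi\rangle^{m-1}$; the only new analysis concerns the low-frequency zone where $\langle\xi\rangle^{m-1}$ fails to vanish. The resulting modification of $\Phi_{f,g}$ near $\tau=0$ carries a factor behaving like $\tau^{(n-m)/m}$, which is integrable and bounded at the origin precisely when $m<n$, while the heavier weight $\langle x\rangle^{m/2}$ absorbs the corresponding contribution via a Stein--Weiss-type weighted inequality. The main obstacle I anticipate is the H\"older-continuity step in the sharp weighted class: naive differentiation in $\rho$ costs one unit of spatial weight, so a direct approach pushes beyond the allowed $\langle x\rangle^\delta$ with $\delta>1/2$ (and $\langle x\rangle^{m/2}$ for TYPE-II). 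Handling this sharply is presumably the substance of the paper's detailed study of the restriction of the Fourier transform on the unit cotangent sphere $\Sigma$.
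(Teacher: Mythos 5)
Your architecture coincides with the paper's: Plancherel plus the co-area formula (your polar coordinates adapted to $a$) turn the pairing into a Cauchy-type integral $\int_0^\infty\Phi_{f,g}(\tau)(\zeta-\tau)^{-1}d\tau$; the imaginary part is a Poisson integral controlled by a uniform trace estimate on $\Sigma(\tau)$, and the near-resonance part of the real part is handled by the principal-value cancellation together with H\"older continuity of $\tau\mapsto\Phi_{f,g}(\tau)$. Two points, however, prevent this from being a proof. First, your claim that $\Phi_{f,g}\in L^1(\mathbb{R}_+)$ ``follows directly from Parseval'' is false: with the multiplier included, $\int_0^\infty\Phi_{f,f}\,d\tau=\int|\xi|^{m-1}|\hat f|^2d\xi$, and the hypothesis $\langle x\rangle^\delta f\in L^2$ gives $\hat f\in H^\delta$ but no decay of $\hat f$ at infinity, so this integral is generically divergent (indeed, its finiteness is essentially the smoothing gain the theorem is trying to prove). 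Consequently the off-resonance part of the real part cannot be bounded by $\|\Phi\|_{L^1}$ over a fixed-width complement. The paper instead splits at $\lambda$-dependent thresholds (e.g.\ $\tau\notin(\lambda/2,3\lambda/2)$, and $|\tau-\lambda|\gtrsim\lambda^{(m-1)/m}$ for large $\lambda$) so that the kernel is pointwise bounded by $C/\tau$ or $C\tau^{-(m-1)/m}$ there, and then absorbs $\int\Phi(\tau)\tau^{-1}d\tau\sim\||\xi|^{-1/2}\hat f\|^2$ by the Stein--Weiss inequality $\||\xi|^{-1/2}\hat f\|\leqslant C\||x|^{1/2}f\|$. (A preliminary reduction, via a zero-order commutator lemma, replaces $|\xi|^{m-1}$ by $|p'(\xi)|$ so that the co-area Jacobian cancels exactly.)

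Second, you correctly identify the H\"older continuity of $\Phi_{f,g}$ in the sharp class $\langle x\rangle^{1/2+\theta}$, $\theta=\delta-1/2$ small, as the crux, but you do not resolve it; deferring it to ``the paper's detailed study'' leaves the central lemma unproved. The resolution is a one-dimensional Sobolev embedding in the radial variable with a \emph{fractional} derivative of order $1/2+\theta$ (so that only $\theta$-H\"older continuity is extracted, at the cost of exactly $|x|^{1/2+\theta}$ rather than a full unit of weight), combined with the weighted commutator estimate
\begin{equation*}
\bigl\lVert a(\xi)^{-(n-1)/2}\,|D_\xi|^{\kappa-1}D_\xi\, a(\xi)^{(n-1)/2}\hat f\bigr\rVert_{L^2}
\leqslant C\bigl\lVert |x|^{\kappa}f\bigr\rVert_{L^2},\qquad \kappa=1/2+\theta,
\end{equation*}
itself proved from the Stein--Weiss fractional-integral inequalities with carefully chosen exponents (and separately for $n=2$, where the admissible range of $\theta$ shrinks). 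Without this ingredient, and with the $L^1$ claim repaired as above, your outline would close; as written it does not. For TYPE-II your low-frequency heuristic ($\Phi\sim\tau^{(n-m)/m}$ near $0$, requiring $m<n$) matches the paper's low-frequency trace estimate, and the paper completes the argument by splitting $\langle\xi\rangle^{m-1}=b_1(\xi)+|\xi|^{m-1}b_2(\xi)$ and invoking the TYPE-I bound for the high-frequency piece.
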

To prove Theorem~\ref{theorem:resolvent}, 
we make full use of the estimates of the restriction of Fourier
transform on the level set of $a(\xi)$. 
Set $\Sigma(\tau)=\{\xi\in\mathbb{R}^n \vert a(\xi)=\tau\}$ for
$\tau\geqslant0$. 
The Fourier transform of $\phi(x)$ is denoted by 
$$
\hat{\phi}(\xi)
=
\frac{1}{(2\pi)^{n/2}}
\int_{\mathbb{R}^n}e^{-ix\cdot\xi}\phi(x)dx. 
$$
Kuroda first established the restriction estimates related with scattering
theory for $a(\xi)=\lvert\xi\rvert$ in \cite{kuroda}. 
His results played essential roles in \cite{ky}.  
His proof depends on the specificity of $a(\xi)=\lvert\xi\rvert$. 
We extend his results as follows. 
\begin{lemma}
\label{theorem:restriction}
Let $n\geqslant2$.  
\begin{description}
\item[Uniform trace estimates]
Suppose $\theta>0$. 
Then 
\begin{equation}
\lVert\hat{f}\rVert_{L^2(\Sigma(\tau))}
\leqslant
C
\lVert\langle{x}\rangle^{1/2+\theta}f\rVert_{L^2(\mathbb{R}^n)}.
\label{equation:trace} 
\end{equation}
\item[H\"older continuity]
Suppose $0<\theta\leqslant1/2$ for $n=2$, and 
$0<\theta<1$ for $n\geqslant3$. 
Then  
\begin{equation}
\lVert
\tau^{(n-1)/2}\hat{f}(\tau\cdot)
-
\lambda^{(n-1)/2}\hat{f}(\lambda\cdot)
\rVert_{L^2(\Sigma(1))}
\leqslant
C
\lvert\tau-\lambda\rvert^\theta
\lVert\langle{x}\rangle^{1/2+\theta}f\rVert_{L^2(\mathbb{R}^n)}.
\label{equation:hoelder}
\end{equation}
\item[Low frequency trace estimates]
Suppose $0<\theta<(n-1)/2$. Then 
\begin{equation}
\lVert\hat{f}\rVert_{L^2(\Sigma(\tau))}
\leqslant
C
\tau^\theta
\lVert\langle{x}\rangle^{1/2+\theta}f\rVert_{L^2(\mathbb{R}^n)}.
\label{equation:low-trace} 
\end{equation}
\end{description}
\end{lemma}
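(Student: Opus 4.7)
All three estimates are weighted trace bounds for $\hat f$ on the homogeneous hypersurfaces $\Sigma(\tau) = \tau\Sigma(1)$. By Plancherel, $\|\langle x\rangle^{1/2+\theta}f\|_{L^2} = \|\langle D_\xi\rangle^{1/2+\theta}\hat f\|_{L^2}$, so \eqref{equation:trace} amounts to a Sobolev trace inequality uniform in $\tau$. I will work throughout in polar coordinates $\xi = \rho\omega$ with $\rho = a(\xi)$ and $\omega \in \Sigma(1)$, and derive the H\"older and low-frequency estimates from the uniform trace.

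\textbf{Part \eqref{equation:trace}.} Fix a finite partition of unity $\{\chi_j\}$ on $\Sigma(1)$; in each chart, flatten $\Sigma(\tau)$ by a change of variables in which $\rho = a(\xi)$ is the normal variable. Since $a$ is homogeneous of degree one, these coordinate maps and their Jacobians are independent of $\tau$. The trace on $\{\rho = \tau\}$ reduces to the one-dimensional estimate
$$|u(\tau)|^2 \le \Bigl(\int_0^\infty|u'(\rho)|^2\langle\rho\rangle^{1+2\theta}d\rho\Bigr)\int_0^\infty\langle\rho\rangle^{-1-2\theta}d\rho,$$
valid for $\theta > 0$ by the fundamental theorem of calculus and Cauchy--Schwarz. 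Both factors are finite and independent of $\tau$, yielding the asserted uniform trace.

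\textbf{Part \eqref{equation:hoelder}.} Write
$$\tau^{(n-1)/2}\hat f(\tau\omega) - \lambda^{(n-1)/2}\hat f(\lambda\omega) = (2\pi)^{-n/2}\int f(x)\bigl[\tau^{(n-1)/2}e^{-i\tau x\cdot\omega} - \lambda^{(n-1)/2}e^{-i\lambda x\cdot\omega}\bigr]dx$$
and split the bracketed kernel into an exponential-difference part, bounded by $C\tau^{(n-1)/2}|x\cdot\omega||\tau-\lambda|$, and a prefactor-difference part, bounded by $C|\tau-\lambda|^{\min(1,(n-1)/2)}$. Taking $L^2(\Sigma(1))$-norms in $\omega$ and interpolating between the resulting Lipschitz-type bound --- controlled via \eqref{equation:trace} applied to $(x\cdot\omega)f$ with weight $\langle x\rangle^{3/2+\theta'}$ --- and the uniform bound from \eqref{equation:trace}, produces the asserted H\"older estimate of order $\theta$. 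The restriction $\theta \le 1/2$ for $n = 2$ reflects the singular behaviour of $\rho^{(n-1)/2} = \rho^{1/2}$ at $\rho = 0$, which is only $1/2$-H\"older there; for $n \ge 3$ the exponent $1$ arises from the Lipschitz bound on the exponential factor.

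\textbf{Part \eqref{equation:low-trace} and main obstacle.} Setting $\lambda = 0$ in \eqref{equation:hoelder} kills the prefactor $\lambda^{(n-1)/2}$ for $n \ge 2$, and directly yields $\|\hat f\|_{L^2(\Sigma(\tau))} \le C\tau^\theta\|\langle x\rangle^{1/2+\theta}f\|_{L^2}$ throughout the H\"older range of \eqref{equation:hoelder}; this already gives \eqref{equation:low-trace} for $n \le 3$. For $\tau \ge 1$, \eqref{equation:low-trace} is weaker than \eqref{equation:trace} and hence automatic. The remaining case $n \ge 4$, $\theta \in [1, (n-1)/2)$, $\tau < 1$ requires an iterated version of the H\"older argument, in which the exponential is Taylor-expanded to higher order in $\tau$ and the resulting remainders are controlled using \eqref{equation:trace} at intermediate weights. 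The principal technical hurdle is securing the uniform-in-$\tau$ trace constant in \eqref{equation:trace} as $\tau \to 0^+$: since $\langle x\rangle$ is not dilation-invariant, no simple rescaling reduces the problem to $\tau = 1$, and uniformity must instead be extracted from the translation invariance of the one-dimensional weighted trace together with the $\tau$-independence of the coordinate charts on $\Sigma(1)$.
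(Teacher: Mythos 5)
Your overall architecture matches the paper's (a one-dimensional embedding transversal to $\Sigma(\tau)$ for the trace, a difference estimate for the H\"older bound, and $\lambda=0$ plus extra work for the low-frequency bound), but each of the three key technical steps has a genuine gap. First, your one-dimensional trace inequality $|u(\tau)|^2\leqslant\bigl(\int_0^\infty|u'(\rho)|^2\langle\rho\rangle^{1+2\theta}d\rho\bigr)\bigl(\int_0^\infty\langle\rho\rangle^{-1-2\theta}d\rho\bigr)$ requires a full derivative of $\hat f$ in the transversal variable, weighted by $\langle\rho\rangle^{1/2+\theta}$ in frequency; under Plancherel this corresponds to a quantity of the type $\lVert\langle D\rangle^{1/2+\theta}(x_nf)\rVert_{L^2}$, not to $\lVert\langle x\rangle^{1/2+\theta}f\rVert_{L^2}$, and for small $\theta$ the hypothesis gives no full derivative of $\hat f$ in $L^2$ at all. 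The correct tool, which the paper uses, is the fractional Sobolev embedding $\sup_{\xi_n}|\hat f(\xi',\xi_n)|^2\leqslant C\int_{\mathbb{R}}|\langle D_{\xi_n}\rangle^{1/2+\theta}\hat f(\xi',\xi_n)|^2d\xi_n$, whose right-hand side after integration in $\xi'$ equals $\lVert\langle x_n\rangle^{1/2+\theta}f\rVert_{L^2}^2$ exactly by Plancherel.

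Second, the interpolation in your H\"older step does not close: from $\min\bigl(C_1|\tau-\lambda|\,\lVert\langle x\rangle^{3/2+\theta'}f\rVert,\ C_0\lVert\langle x\rangle^{1/2+\theta''}f\rVert\bigr)$ you obtain the geometric mean $\lVert\langle x\rangle^{3/2+\theta'}f\rVert^{\theta}\lVert\langle x\rangle^{1/2+\theta''}f\rVert^{1-\theta}$, and log-convexity of weighted norms goes the wrong way: this geometric mean \emph{dominates} the norm with the interpolated exponent rather than being dominated by it (test on $f$ supported where $|x|\sim R$ with $R$ large). The paper avoids this by applying a genuinely fractional one-dimensional embedding of order $1/2+\theta$ in the radial variable to $\mu\mapsto\mu^{(n-1)/2}\hat f(\mu\omega)$ and then controlling $a(\xi)^{-(n-1)/2}r_{1/2+\theta}(D_\xi)a(\xi)^{(n-1)/2}\hat f$ in $L^2$ by $\lVert|x|^{1/2+\theta}f\rVert$ via the weighted commutator estimate \eqref{equation:com3}; that commutator lemma is the heart of the matter (and is also the source of the stated restrictions on $\theta$), and it is entirely absent from your argument. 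Third, for the low-frequency estimate with $1\leqslant\theta<(n-1)/2$ and $n\geqslant4$, your ``iterated Taylor expansion'' is not carried out and is unlikely to work, since higher-order expansion of the exponential costs higher moments of $f$ than the weight $\langle x\rangle^{1/2+\theta}$ supplies; the paper instead writes $\hat f=\tau^{\nu}\cdot\tau^{-\nu}\hat f$ with $\nu=\theta-1/2$, applies the already-proved case $\theta=1/2$ to $a(D_x)^{-\nu}f$, and absorbs the extra weight through the Stein--Weiss inequalities \eqref{equation:sw1}--\eqref{equation:sw2} together with \eqref{equation:com1}.
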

To conclude this section, we show how Theorem~\ref{theorem:resolvent} 
implies Theorem~\ref{theorem:main}. 
In \cite{kato} Kato discovered this principle 
in an abstract operator theoretic setting. 
We give an elementary Fourier analytic approach below.  
For an appropriate function $f(t,x)$, we use the following notation
\begin{align*}  \tilde{f}(\tau,\xi)
& =
  \frac{1}{(2\pi)^{(1+n)/2}}
  \iint_{\mathbb{R}^{1+n}}
  e^{-it\tau-ix\cdot\xi}
  f(t,x)
  dtdx,
\\
  \mathscr{F}_t[f](\tau,x)
& =
  \frac{1}{\sqrt{2\pi}}
  \int_{\mathbb{R}}
  e^{-it\tau}
  f(t,x)
  dt.
\end{align*}
Let $Y(t)$ be the Heaviside function
$$
Y(t)
=
\begin{cases}
1 & \ \text{if}\ \  t\geqslant0,
\\
0 & \ \text{if}\ \ t<0.
\end{cases}
$$
Set $f_\pm(t,x)=Y(\pm{t})f(t,x)$ for short. 
Using the Fourier transform in the space-time, we have 
$$
\widetilde{Gf}(\tau,\xi)
=
\frac{\tilde{f}_+(\tau,\xi)}{\tau-p(\xi)-i0}
+
\frac{\tilde{f}_-(\tau,\xi)}{\tau-p(\xi)+i0}.
$$
Using the above formula and the limiting absorption principle, 
one can easily show that 
\eqref{equation:resolvent1} implies \eqref{equation:chihara2},  
and 
\eqref{equation:resolvent2} implies \eqref{equation:chihara4}.  
On the other hand, 
the estimates of $e^{itp(D_x)}\phi$ are equivalent to 
$$
\lVert(Qe^{itp(D_x)})^\ast{f}\rVert_{L^2(\mathbb{R}^n)}
\leqslant
C
\lVert{f}\rVert_{L^2(\mathbb{R}^{1+n})} 
$$ 
since 
$$
\Bigl(Qe^{itp(D_x)}\phi,f\Bigr)_{L^2(\mathbb{R}^{1+n})}
=
\Bigl(\phi,(Qe^{itp(D_x)})^\ast{f}\Bigr)_{L^2(\mathbb{R}^n)}, 
$$
where 
$Q=\langle{x}\rangle^{-\delta}\lvert{D_x}\rvert^{(m-1)/2}$ 
or 
$Q=\langle{x}\rangle^{-m/2}\langle{D_x}\rangle^{(m-1)/2}$. 
By the co-area formula (see e.g., \cite[Theorem~5.8 in Chapter~II]{sakai}), 
H\"ormander's observation in \cite[Section~14.3]{hoermander} 
and the estimates \eqref{equation:resolvent1}-\eqref{equation:resolvent2}, 
we deduce 
\begin{align*}
& \quad
  \lVert(Qe^{itp(D_x)})^\ast{f}\rVert_{L^2(\mathbb{R}^n)}^2
\\
& =
  \lVert\widetilde{Q^\ast{f}}(p(\cdot),\cdot)\rVert_{L^2(\mathbb{R}^n)}^2
\\
& =
  \int_0^\infty
  \int_{p(\xi)=\tau}
  \frac{\lvert\widetilde{Q^\ast{f}}(\tau,\xi)\rvert^2}{\lvert{p^\prime(\xi)}\rvert}
  d\sigma(\xi)
  d\tau
\\
& =
  \lim_{\eta\downarrow0}
  \frac{1}{\pi}
  \int_0^\infty
  \int_\mathbb{R}
  \frac{\eta}{(\tau-\mu)^2+\eta^2}
  \int_{p(\xi)=\mu}
  \frac{\lvert\widetilde{Q^\ast{f}}(\tau,\xi)\rvert^2}{\lvert{p^\prime(\xi)}\rvert}
  d\sigma(\xi)
  d\mu
  d\tau
\\
& =
  \lim_{\eta\downarrow0}
  \im
  \frac{\pm}{\pi}
  \int_0^\infty
  \int_\mathbb{R}
  \frac{1}{\tau-\mu\pm{i}\eta}
  \int_{p(\xi)=\mu}
  \frac{\lvert\widetilde{Q^\ast{f}}(\tau,\xi)\rvert^2}{\lvert{p^\prime(\xi)}\rvert}
  d\sigma(\xi)
  d\mu
  d\tau  
\\
& =
  \lim_{\eta\downarrow0}
  \im
  \frac{\pm}{\pi}
  \int_0^\infty
  \int_{\mathbb{R}^n}
  \frac{1}{\tau\pm{i}\eta-p(\xi)}
  \frac{\lvert\widetilde{Q^\ast{f}}(\tau,\xi)\rvert^2}{\lvert{p^\prime(\xi)}\rvert}
  d\xi
  d\tau
\\
& =
  \lim_{\eta\downarrow0}
  \im
  \frac{\pm}{\pi}
  \int_0^\infty
  \Bigl(
  (\tau\pm{i}\eta-p(D_x))^{-1}Q^\ast\mathscr{F}_t[f](\tau),
  Q^\ast\mathscr{F}_t[f](\tau)
  \Bigr)_{L^2(\mathbb{R}^n)}
  d\tau
\\
& \leqslant
  C
  \int_0^\infty
  \lVert{\mathscr{F}_t[f](\tau)}\rVert_{L^2(\mathbb{R}^n)}^2
  d\tau
\\
& \leqslant
  C
  \lVert{f}\rVert_{L^2(\mathbb{R}^{1+n})}^2,
\end{align*}
which shows that 
\eqref{equation:resolvent1} implies \eqref{equation:chihara1}, 
and 
\eqref{equation:resolvent2} implies \eqref{equation:chihara3}. 
\par
The organization of this paper is as follows. 
In Section~2 we prepare some weighted commutator estimates needed
later. 
Section~3 is devoted to proving Lemma~\ref{theorem:restriction}. 
In Sections~4 and 5 we prove \eqref{equation:resolvent1} 
and \eqref{equation:resolvent2} respectively.  
%%%%%%
%%%%%%
%%%%%%
%%%%%%
%%%%%%
%%%%%%
\section{Weighted commutator estimates}
\label{section:commutator}
This section is devoted to preparing weighted commutator estimates used
later. Our basic tools are weighted estimates of fractional integrals
due to Stein and Weiss. 
\begin{theorem}[{\cite[Theorem~B$^\ast$]{sw}}]
Suppose 
$0<\alpha<n$, 
$\beta<n/2$, 
$\gamma<n/2$ 
and 
$\alpha=\beta+\gamma$. 
Then 
\begin{equation}
\lVert
a(\xi)^{-\beta}
\lvert{D_\xi}\rvert^{-\alpha}
\hat{f}
\rVert_{L^2(\mathbb{R}^n)}
\leqslant
C
\lVert
a(\xi)^\gamma
\hat{f}
\rVert_{L^2(\mathbb{R}^n)}. 
\label{equation:sw1}
\end{equation}
In particular, if $0\leqslant\beta<n/2$, then 
\begin{equation}
\lVert
a(\xi)^{-\beta}
\hat{f}
\rVert_{L^2(\mathbb{R}^n)}
\leqslant
C
\lVert
\lvert{x}\rvert^\beta
f
\rVert_{L^2(\mathbb{R}^n)}.  
\label{equation:sw2}
\end{equation}
\end{theorem}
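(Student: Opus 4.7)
The plan is to reduce \eqref{equation:sw1} directly to the classical Stein--Weiss weighted inequality for the Riesz potential (with the isotropic weight $\lvert\xi\rvert$ in place of $a(\xi)$), and then to deduce \eqref{equation:sw2} from \eqref{equation:sw1} by a short Plancherel duality.

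For the first step, I would exploit that $a$, being continuous on $\mathbb{R}^n$, strictly positive on $\mathbb{R}^n\setminus\{0\}$, and positively homogeneous of degree one, attains a positive minimum and a finite maximum on the unit sphere. By homogeneity this yields constants $c,C>0$ with
$$
c\lvert\xi\rvert \,\leqslant\, a(\xi) \,\leqslant\, C\lvert\xi\rvert, \qquad \xi\in\mathbb{R}^n.
$$
Consequently multiplication by $a(\xi)^{-\beta}$ and by $a(\xi)^\gamma$ is pointwise comparable to multiplication by $\lvert\xi\rvert^{-\beta}$ and $\lvert\xi\rvert^\gamma$ respectively, so \eqref{equation:sw1} becomes an immediate consequence of Theorem~B$^\ast$ of \cite{sw} applied in the $\xi$-variable to the isotropic Riesz potential $\lvert D_\xi\rvert^{-\alpha}$ with power weights.

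To obtain \eqref{equation:sw2} I would apply \eqref{equation:sw1} with $\alpha=\beta$ and $\gamma=0$ to the auxiliary function $g$ determined by $\hat g=\lvert D_\xi\rvert^\beta\hat f$ (taking $f$ Schwartz first, and then using density in the weighted space $\{f:\lvert x\rvert^\beta f\in L^2\}$). A direct computation using the Fourier inversion formula shows that $\lvert D_\xi\rvert^\beta\hat f=\widehat{\lvert x\rvert^\beta f}$, so Plancherel gives $\lVert\hat g\rVert_{L^2}=\lVert\,\lvert x\rvert^\beta f\,\rVert_{L^2}$, while $\lvert D_\xi\rvert^{-\beta}\hat g=\hat f$. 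Substituting these two identities into \eqref{equation:sw1} produces exactly \eqref{equation:sw2}.

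The essentially only obstacle is that the hard analytic content resides entirely in the Stein--Weiss theorem itself, which is invoked as a black box; the reduction $a(\xi)\sim\lvert\xi\rvert$ uses only compactness of the unit sphere, and the passage from \eqref{equation:sw1} to \eqref{equation:sw2} is pure Fourier duality. The hypotheses $\beta<n/2$ and $\gamma<n/2$ are precisely those required by Stein--Weiss, and they are preserved under the reduction to the weights $\lvert\xi\rvert^{-\beta}$ and $\lvert\xi\rvert^\gamma$.
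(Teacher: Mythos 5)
Your argument is correct and is essentially what the paper intends: the theorem is stated with a citation to \cite{sw} and no written proof, the pointwise equivalence $c\lvert\xi\rvert\leqslant a(\xi)\leqslant C\lvert\xi\rvert$ (from positivity, continuity and homogeneity of $a$ on the unit sphere) being the implicit reduction to the classical power-weight case, and \eqref{equation:sw2} following from \eqref{equation:sw1} with $(\alpha,\beta,\gamma)=(\beta,\beta,0)$ via Plancherel exactly as you describe. The only pedantic remark is that the endpoint $\beta=0$ of \eqref{equation:sw2} is not covered by \eqref{equation:sw1}, which requires $\alpha>0$, but in that case the inequality is just Plancherel's theorem.
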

In this section we show two lemmas. 
The first one is concerned with the commutator of weights and singular
integral operators of order zero. 
\begin{lemma}
\label{theorem:lem1}
Let $\delta$ satisfy 
$0<\delta\leqslant1$ for $n\geqslant3$ 
and 
$0<\delta<1$ for $n=2$. 
Suppose that 
$q(\xi){\in}C^\infty(\mathbb{R}^n\setminus\{0\})$ 
is homogeneous of degree zero. 
Then 
\begin{align}
  \lVert
  (\lvert{x}\rvert^\delta{q(D_x)}-q(D_x)\lvert{x}\rvert^\delta)f
  \rVert_{L^2(\mathbb{R}^n)}
& \leqslant
  C
  \lVert
  \lvert{x}\rvert^\delta
  f
  \rVert_{L^2(\mathbb{R}^n)},
\label{equation:com1}
\\
  \lVert
  \langle{x}\rangle^\delta
  q(D_x)
  f
  \rVert_{L^2(\mathbb{R}^n)}
& \leqslant
  C
  \lVert
  \langle{x}\rangle^\delta
  f
  \rVert_{L^2(\mathbb{R}^n)}.
\label{equation:com2}
\end{align}
\end{lemma}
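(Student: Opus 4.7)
The plan is to exploit the Calder\'on--Zygmund structure of $q(D_x)$ and reduce both commutator estimates to a single application of the Stein--Weiss inequality \eqref{equation:sw2}. Since $q$ is bounded, smooth on $\mathbb{R}^n\setminus\{0\}$, and homogeneous of degree zero, its inverse Fourier transform (as a tempered distribution) has the classical form $c_0\delta_0+\mathrm{p.v.}\bigl(\Omega(x/\lvert{x}\rvert)/\lvert{x}\rvert^n\bigr)$ with $\Omega\in C^\infty(S^{n-1})$ of mean zero; denote the singular part by $K_0$, so that $\lvert K_0(x)\rvert\leqslant C\lvert{x}\rvert^{-n}$ away from the origin. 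The $L^2$-boundedness of $q(D_x)$ itself is immediate from Plancherel, since $q\in L^\infty$.

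For \eqref{equation:com1}, the scalar operator $c_0 I$ commutes with multiplication by $\lvert{x}\rvert^\delta$, so for Schwartz $f$ the commutator admits the integral representation
\[
\bigl(\lvert{x}\rvert^\delta q(D_x)-q(D_x)\lvert{x}\rvert^\delta\bigr)f(x)
=\mathrm{p.v.}\int_{\mathbb{R}^n}K_0(x-y)\bigl(\lvert{x}\rvert^\delta-\lvert{y}\rvert^\delta\bigr)f(y)\,dy.
\]
Subadditivity of $t\mapsto t^\delta$ on $[0,\infty)$ together with $\bigl\lvert\lvert{x}\rvert-\lvert{y}\rvert\bigr\rvert\leqslant\lvert{x-y}\rvert$ yields the H\"older-type bound $\bigl\lvert\lvert{x}\rvert^\delta-\lvert{y}\rvert^\delta\bigr\rvert\leqslant C\lvert{x-y}\rvert^\delta$ for $0<\delta\leqslant 1$. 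Combined with the size estimate on $K_0$, the integrand is pointwise majorized by $C\lvert{x-y}\rvert^{\delta-n}\lvert f(y)\rvert$, which is integrable near $y=x$ (using $\delta>0$), so the principal value becomes an absolutely convergent integral and we obtain the pointwise domination $\bigl\lvert\bigl(\lvert{x}\rvert^\delta q(D_x)-q(D_x)\lvert{x}\rvert^\delta\bigr)f(x)\bigr\rvert\leqslant C(I_\delta\lvert f\rvert)(x)$, where $I_\delta$ is the Riesz potential of order $\delta$, $\widehat{I_\delta g}(\xi)=c\lvert\xi\rvert^{-\delta}\hat g(\xi)$. Applying Plancherel together with the ellipticity equivalence $a(\xi)\asymp\lvert\xi\rvert$ (from positive homogeneity of $a$ and compactness of $S^{n-1}$) and then invoking \eqref{equation:sw2} with $\beta=\delta$ gives
\[
\lVert I_\delta\lvert f\rvert\rVert_{L^2}\asymp\lVert a(\xi)^{-\delta}\widehat{\lvert f\rvert}\rVert_{L^2}\leqslant C\lVert\lvert{x}\rvert^\delta\lvert f\rvert\rVert_{L^2}=C\lVert\lvert{x}\rvert^\delta f\rVert_{L^2},
\]
which proves \eqref{equation:com1}.

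For \eqref{equation:com2}, I would split $\lVert\langle{x}\rangle^\delta q(D_x)f\rVert_{L^2}$ via the triangle inequality into $\lVert q(D_x)(\langle{x}\rangle^\delta f)\rVert_{L^2}+\lVert(\langle{x}\rangle^\delta q(D_x)-q(D_x)\langle{x}\rangle^\delta)f\rVert_{L^2}$, controlling the first summand by the $L^2$-boundedness of $q(D_x)$ and treating the second by repeating the kernel argument above with $\langle{x}\rangle^\delta$ in place of $\lvert{x}\rvert^\delta$; the analogous H\"older bound $\lvert\langle{x}\rangle^\delta-\langle{y}\rangle^\delta\rvert\leqslant C\lvert{x-y}\rvert^\delta$ holds, and the resulting weighted Riesz estimate is absorbed via $\lvert{x}\rvert^\delta\leqslant\langle{x}\rangle^\delta$. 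The only substantive technical point, rather than a genuine obstacle, is matching the Stein--Weiss admissible range $0\leqslant\beta<n/2$ to the stated hypothesis on $\delta$: this is precisely what forces the endpoint exclusion $\delta=1$ when $n=2$ (there $n/2=1$), while $\delta=1$ remains admissible for every $n\geqslant 3$, reflecting also the sharpness of the H\"older bound at $\delta=1$.
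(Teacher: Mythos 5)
Your proof is correct, and it takes a genuinely different --- and in fact shorter --- route than the paper's. The key divergence is the pointwise bound on $\bigl\lvert\lvert{x}\rvert^\delta-\lvert{y}\rvert^\delta\bigr\rvert$: you use subadditivity of $t\mapsto t^\delta$ together with the reverse triangle inequality to get the fractional bound $\bigl\lvert\lvert{x}\rvert^\delta-\lvert{y}\rvert^\delta\bigr\rvert\leqslant\lvert{x-y}\rvert^\delta$, which turns the commutator kernel into a multiple of the Riesz kernel $\lvert{x-y}\rvert^{\delta-n}$ with no residual spatial weights, so a single application of \eqref{equation:sw2} with $\beta=\delta<n/2$ finishes the job uniformly in $n=2$ and $n\geqslant3$; the admissibility condition $\delta<n/2$ is exactly what reproduces the stated hypothesis on $\delta$ (endpoint $\delta=1$ excluded only when $n=2$). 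The paper instead uses a factorization and the mean value theorem to get $\bigl\lvert\lvert{x}\rvert^\delta-\lvert{y}\rvert^\delta\bigr\rvert\leqslant C\lvert{x-y}\rvert(\lvert{x}\rvert^{\delta-1}+\lvert{y}\rvert^{\delta-1})$, which produces the order-one potential $\lvert{D_x}\rvert^{-1}$ sandwiched between weights; the resulting two-weight applications of \eqref{equation:sw1}--\eqref{equation:sw2} in \eqref{equation:e4}--\eqref{equation:e5} require $\beta=1<n/2$ and hence fail for $n=2$, forcing the separate interpolation argument \eqref{equation:e6}--\eqref{equation:e10}. Your approach eliminates that case distinction. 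The treatments of \eqref{equation:com2} also differ mildly --- the paper reduces it to \eqref{equation:com1} via $\langle{x}\rangle^\delta\leqslant C(1+\lvert{x}\rvert^\delta)$ and the $L^2$-boundedness of $q(D_x)$, while you rerun the kernel argument with $\langle{x}\rangle^\delta$, for which the analogous H\"older bound indeed holds --- and both are fine. You are, if anything, slightly more careful than the paper in accounting for the $c_0\delta_0$ term in the inverse Fourier transform of $q$ and in justifying that the principal value becomes an absolutely convergent integral against the H\"older-vanishing factor.
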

\begin{proof}
It suffices to show \eqref{equation:com1} since 
\begin{align*}
  \lVert
  \langle{x}\rangle^\delta
  q(D_x)
  f
  \rVert_{L^2(\mathbb{R}^n)}  
& \leqslant
  C
  \lVert
  q(D_x)
  f
  \rVert_{L^2(\mathbb{R}^n)}
  +
  C
  \lVert
  \lvert{x}\rvert^\delta
  q(D_x)
  f
  \rVert_{L^2(\mathbb{R}^n)}
\\
& \leqslant
  C
  \lVert
  q(D_x)
  f
  \rVert_{L^2(\mathbb{R}^n)}
  +
  C
  \lVert
  q(D_x)
  \lvert{x}\rvert^\delta
  f
  \rVert_{L^2(\mathbb{R}^n)}
\\
& \qquad
  +
  C
  \lVert
  (
  \lvert{x}\rvert^\delta
  q(D_x)
  -
  q(D_x)
  \lvert{x}\rvert^\delta
  )
  f
  \rVert_{L^2(\mathbb{R}^n)}
\\
& \leqslant
  C
  \lVert
  \langle{x}\rangle^\delta
  f
  \rVert_{L^2(\mathbb{R}^n)}
  +
  \lVert
  (
  \lvert{x}\rvert^\delta
  q(D_x)
  -
  q(D_x)
  \lvert{x}\rvert^\delta
  )
  f
  \rVert_{L^2(\mathbb{R}^n)}.
\end{align*}
Since the inverse Fourier transform of $q(\xi)$ is 
a homogeneous function of degree $-n$, we have
\begin{equation}
\Bigl\lvert
(
\lvert{x}\rvert^\delta
q(D_x)
-
q(D_x)
\lvert{x}\rvert^\delta
)
f(x)
\Bigr\rvert
\leqslant
C
\int_{\mathbb{R}^n}
\frac{\Bigl\lvert{\lvert{x}\rvert^\delta-\lvert{y}\rvert^\delta}\Bigr\rvert}{\lvert{x-y}\rvert^n}
\lvert{f(y)}\rvert
dy.
\label{equation:e1}
\end{equation}
Pick up a positive integer $k$ satisfying $(k+1)\delta>1$
The mean value theorem gives 
\begin{align}
  \Bigl\lvert
  \lvert{x}\rvert^\delta
  -
  \lvert{y}\rvert^\delta
  \Bigr\rvert
& =
  \dfrac{
  \Bigl\lvert
  \lvert{x}\rvert^{(k+1)\delta}
  -
  \lvert{y}\rvert^{(k+1)\delta}
  \Bigr\rvert}
  {\displaystyle\sum_{j=0}^k
  \lvert{x}\rvert^{j\delta}\lvert{y}\rvert^{(k-j)\delta}}
\nonumber
\\
& =
  \frac{
  (k+1)\delta
  \Bigl\lvert
  \lvert{x}\rvert-\lvert{y}\rvert
  \Bigr\rvert}
  {\displaystyle\sum_{j=0}^k\lvert{x}\rvert^{j\delta}\lvert{y}\rvert^{(k-j)\delta}}
  \int_0^1
  \{t\lvert{x}\rvert+(1-t)\lvert{y}\rvert\}^{(k+1)\delta-1}
  dt
\nonumber
\\
& \leqslant
  \frac{(k+1)\delta\lvert{x-y}\rvert(\lvert{x}\rvert^{(k+1)\delta-1}+\lvert{y}\rvert^{(k+1)\delta-1})}{\lvert{x}\rvert^{k\delta}+\lvert{y}\rvert^{k\delta}}
\nonumber
\\ 
& \leqslant
  (k+1)\delta
  \frac{\lvert{x-y}\rvert}{\lvert{x}\rvert^{1-\delta}}
  +
  (k+1)\delta
  \frac{\lvert{x-y}\rvert}{\lvert{y}\rvert^{1-\delta}}
\label{equation:e2}
\end{align}
\par
We split our proof into two cases $n\geqslant3$ and $n=2$. 
When $n\geqslant3$, substituting \eqref{equation:e2} into
 \eqref{equation:e1}, we have 
\begin{align}
  \Bigl\lvert
  (
  \lvert{x}\rvert^\delta
  q(D_x)
  -
  q(D_x)
  \lvert{x}\rvert^\delta
  )
  f(x)
  \Bigr\rvert
& \leqslant
  C
  \int_{\mathbb{R}^n}
  \frac{1}{\lvert{x-y}\rvert^{n-1}}
  \left(
  \frac{1}{\lvert{x}\rvert^{1-\delta}}
  +
  \frac{1}{\lvert{y}\rvert^{1-\delta}}  
  \right)
  \lvert{f(y)}\rvert
  dy
\nonumber
\\
& =
  C
  \left\{
  \lvert{x}\rvert^{-(1-\delta)}
  \lvert{D_x}\rvert^{-1}
  \lvert{f(x)}\rvert
  +
  \lvert{D_x}\rvert^{-1}
  \Bigl(\lvert{x}\rvert^{-(1-\delta)}\lvert{f(x)}\rvert\Bigr)
  \right\}. 
\label{equation:e3}  
\end{align}
Applying \eqref{equation:sw2} with $\beta=1$ to \eqref{equation:e3}, 
we obtain \eqref{equation:com1} for $\delta=1$ and $n\geqslant3$. 
Suppose that $0<\delta<1$. 
Using 
\eqref{equation:sw1} with $(\alpha,\beta,\gamma)=(1,1-\delta,\delta)$ 
and 
\eqref{equation:sw2} with $\beta=1$, 
we get 
\begin{align}
  \Bigl\lVert
  \lvert{x}\rvert^{-(1-\delta)}
  \lvert{D_x}\rvert^{-1}
  \lvert{f(x)}\rvert
  \Bigr\rVert_{L^2(\mathbb{R}^n)}
& \leqslant
  C
  \Bigl\lVert
  \lvert{x}\rvert^\delta
  f
  \Bigr\rVert_{L^2(\mathbb{R}^n)},
\label{equation:e4}
\\
  \left\lVert
  \lvert{D_x}\rvert^{-1}
  \Bigl(\lvert{x}\rvert^{-(1-\delta)}\lvert{f(x)}\rvert\Bigr)
  \right\rVert_{L^2(\mathbb{R}^n)}
& =
  \left\lVert
  \lvert\xi\rvert^{-1}
  \lvert{D_\xi}\rvert^{-(1-\delta)}
  \widehat{\lvert{f}\rvert}
  \right\rVert_{L^2(\mathbb{R}^n)}
\nonumber
\\
& \leqslant
  C
  \left\lVert
  \lvert{x}\rvert^{1-(1-\delta)}
  \lvert{f}\rvert    
  \right\rVert_{L^2(\mathbb{R}^n)}
\nonumber
\\
& =
  C
  \lVert
  \lvert{x}\rvert^\delta
  f
  \rVert_{L^2(\mathbb{R}^n)}
\label{equation:e5}
\end{align}
respectively.  
Combining 
\eqref{equation:e3}, 
\eqref{equation:e4} 
and 
\eqref{equation:e5}, 
we obtain 
\eqref{equation:com1} 
for $0<\delta<1$ and $n\geqslant3$. 
\par
Suppose $n=2$ and $0<\delta<1$. 
Using elementary interpolation and \eqref{equation:e2}, we deduce  
\begin{align}
  \Bigl\lvert
  \lvert{x}\rvert^\delta-\lvert{y}\rvert^\delta
  \Bigr\rvert
& =
  \Bigl\lvert
  \lvert{x}\rvert^\delta-\lvert{y}\rvert^\delta
  \Bigr\rvert^{1-\delta}
  \Bigl\lvert
  \lvert{x}\rvert^\delta-\lvert{y}\rvert^\delta
  \Bigr\rvert^\delta
\nonumber
\\
& \leqslant
  \Bigl\lvert
  \lvert{x}\rvert^\delta+\lvert{y}\rvert^\delta
  \Bigr\rvert^{1-\delta}
  \Bigl\lvert
  \lvert{x}\rvert^\delta-\lvert{y}\rvert^\delta
  \Bigr\rvert^\delta
\nonumber
\\
& \leqslant
  C
  (\lvert{x}\rvert^\delta+\lvert{y}\rvert^\delta)^{1-\delta}
  \lvert{x-y}\rvert^\delta
  \left(
  \frac{1}{\lvert{x}\rvert^{1-\delta}}
  +
  \frac{1}{\lvert{y}\rvert^{1-\delta}}
  \right)^\delta
\nonumber
\\
& \leqslant
  C
  \lvert{x-y}\rvert^\delta
  \left(
  \lvert{x}\rvert^{\delta(1-\delta)}
  +
  \lvert{y}\rvert^{\delta(1-\delta)}
  \right)
  \left(
  \frac{1}{\lvert{x}\rvert^{\delta(1-\delta)}}
  +
  \frac{1}{\lvert{y}\rvert^{\delta(1-\delta)}}
  \right)
\nonumber
\\
& \leqslant
  C
  \lvert{x-y}\rvert^\delta
  \left(
  1
  +
  \frac{\lvert{y}\rvert^{\delta(1-\delta)}}{\lvert{x}\rvert^{\delta(1-\delta)}}
  +
  \frac{\lvert{x}\rvert^{\delta(1-\delta)}}{\lvert{y}\rvert^{\delta(1-\delta)}}
  \right)
\label{equation:e6}
\end{align}
Substituting \eqref{equation:e6} into \eqref{equation:e1}, we have 
\begin{align}
  \Bigl\lvert
  (
  \lvert{x}\rvert^\delta
  q(D_x)
  -
  q(D_x)
  \lvert{x}\rvert^\delta
  )
  f
  \Bigr\rvert
& \leqslant
  C
  \Bigl(
  \Bigl\lvert\lvert{D_x}\rvert^{-\delta}\lvert{f(x)}\rvert\Bigr\rvert
\nonumber
\\
& \qquad\qquad
  +
  \Bigl\lvert
  \lvert{x}\rvert^{\delta(1-\delta)}
  \lvert{D_x}\rvert^{-\delta}
  \lvert{x}\rvert^{-\delta(1-\delta)}
  \lvert{f(x)}\rvert
  \Bigr\rvert
\nonumber
\\
& \qquad\qquad\qquad
  +
  \Bigl\lvert
  \lvert{x}\rvert^{-\delta(1-\delta)}
  \lvert{D_x}\rvert^{-\delta}
  \lvert{x}\rvert^{\delta(1-\delta)}
  \lvert{f(x)}\rvert
  \Bigr\rvert 
  \Bigr).
\label{equation:e7}
\end{align}
Using \eqref{equation:sw2} with $\beta=\delta$, we have 
\begin{equation}
\Bigl\lVert
\lvert{D_x}\rvert^{-\delta}\lvert{f}\rvert
\Bigr\rVert_{L^2{\mathbb{R}^n}}
\leqslant
C
\Bigl\lVert
\lvert{x}\rvert^\delta
f
\Bigr\rVert_{L^2(\mathbb{R}^n)}.
\label{equation:e8}
\end{equation}
Here we remark that 
$0<\delta(1-\delta)\leqslant1/4$ 
and 
$\delta(2-\delta)<1=n/2$ 
for 
$0<\delta<1$. 
Using 
\eqref{equation:sw1} 
with 
$(\alpha,\beta,\gamma)=(\delta,-\delta(1-\delta),\delta(2-\delta))$ 
and 
$(\alpha,\beta,\gamma)=(\delta,\delta(1-\delta),\delta^2)$, 
we deduce 
\begin{align}
  \Bigl\lVert
  \lvert{x}\rvert^{(\delta(1-\delta))}
  \lvert{D_x}\rvert^{-\delta}
  \lvert{x}\rvert^{(-\delta(1-\delta))}
  \lvert{f}\rvert
  \Bigr\rVert_{L^2(\mathbb{R}^n)}
& \leqslant
  C
  \Bigl\lVert
  \lvert{x}\rvert^{\delta(2-\delta)-\delta(1-\delta)}
  \lvert{f}\rvert
  \Bigr\rVert_{L^2(\mathbb{R}^n)}
\nonumber
\\
& =
  C
  \Bigl\lVert
  \lvert{x}\rvert^\delta{f}
  \Bigr\rVert_{L^2(\mathbb{R}^n)},
\label{equation:e9}
\\
  \Bigl\lVert
  \lvert{x}\rvert^{-(\delta(1-\delta))}
  \lvert{D_x}\rvert^{-\delta}
  \lvert{x}\rvert^{(\delta(1-\delta))}
  \lvert{f}\rvert
  \Bigr\rVert_{L^2(\mathbb{R}^n)}
& \leqslant
  C
  \Bigl\lVert
  \lvert{x}\rvert^{\delta^2+\delta(1-\delta)}
  \lvert{f}\rvert
  \Bigr\rVert_{L^2(\mathbb{R}^n)}
\nonumber
\\
& =
  C
  \Bigl\lVert
  \lvert{x}\rvert^\delta{f}
  \Bigr\rVert_{L^2(\mathbb{R}^n)}.
\label{equation:e10}
\end{align}
Combining 
\eqref{equation:e7}, 
\eqref{equation:e8}, 
\eqref{equation:e9}, 
and 
\eqref{equation:e10}, 
we obtain  
\eqref{equation:com1} 
for 
$n=2$. 
This completes the proof. 
\end{proof}
The second lemma in this section is concerned with commutator estimates
between weights and fractional differentiations in frequency space of
$\xi\in\mathbb{R}^n$. 
\begin{lemma}
\label{theorem:lem2} 
Let $n\geqslant2$, and let $\kappa$ satisfy 
$0<\kappa<1$ for $n=2$ and $0<\kappa<3/2$ for $n\geqslant3$. 
Set $\rho=(n-1)/2$ and 
$r_\kappa(D_\xi)=\lvert{D_\xi}\rvert^{\kappa-1}D_\xi$. 
Then 
\begin{equation}
\left\lVert
a(\xi)^{-\rho}r_\kappa(D_\xi)a(\xi)^\rho\hat{f}
\right\rVert_{L^2(\mathbb{R}^n)} 
\leqslant
C
\Bigl\lVert
\lvert{x}\rvert^\kappa{f}
\Bigr\rVert_{L^2(\mathbb{R}^n)}. 
\label{equation:com3}
\end{equation}
\end{lemma}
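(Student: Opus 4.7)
The plan is to split
\begin{equation*}
a(\xi)^{-\rho}r_\kappa(D_\xi)a(\xi)^\rho\hat f
=r_\kappa(D_\xi)\hat f+a(\xi)^{-\rho}\bigl[r_\kappa(D_\xi),\,a(\xi)^\rho\bigr]\hat f
\end{equation*}
and bound the two summands separately. For the principal term I would apply Plancherel on the $\xi$-Fourier side: the symbol of $r_\kappa(D_\xi)$ with respect to $\mathcal F_\xi$ is $|y|^{\kappa-1}y$ and $\mathcal F_\xi[\hat f](y)=f(-y)$, so $\|r_\kappa(D_\xi)\hat f\|_{L^2(d\xi)}=C\||x|^\kappa f\|_{L^2(dx)}$, which already is the right-hand side of \eqref{equation:com3}.

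For the commutator I would use the fact that $r_\kappa(D_\xi)$ has an odd convolution kernel $k_\kappa(\xi-\eta)$, homogeneous of degree $-n-\kappa$, so
\begin{equation*}
\bigl[r_\kappa(D_\xi),\,a^\rho\bigr]\hat f(\xi)
=\mathrm{p.v.}\int k_\kappa(\xi-\eta)\bigl\{a(\eta)^\rho-a(\xi)^\rho\bigr\}\hat f(\eta)\,d\eta.
\end{equation*}
In the range $0<\kappa<1$ I would estimate $|a(\eta)^\rho-a(\xi)^\rho|$ by a first-order difference: the mean value theorem applied to the homogeneous function $a^\rho$ when $\rho\ge 1$ (i.e.\ $n\ge 3$), and the interpolation trick of \eqref{equation:e2} and \eqref{equation:e6} when $\rho=1/2$ (i.e.\ $n=2$). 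Combined with $a\asymp|\cdot|$, this reduces $|a^{-\rho}[r_\kappa,a^\rho]\hat f(\xi)|$ to a sum of Riesz-potential expressions of order $1-\kappa$ applied to $|\hat f|$ against power weights in $|\xi|$ and $|\eta|$, which are then estimated by \eqref{equation:sw1}--\eqref{equation:sw2} with the indices $(\alpha,\beta,\gamma)$ chosen so that $\alpha=\beta+\gamma$, $\beta,\gamma<n/2$ and $0<\alpha<n$.

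When $1\le\kappa<3/2$ (possible only for $n\ge 3$, where $\rho\ge 1$ and $a^\rho\in C^1$), the first-order bound no longer yields a locally integrable kernel, and I would instead Taylor-expand to second order,
\begin{equation*}
a(\eta)^\rho-a(\xi)^\rho=\nabla a^\rho(\xi)\cdot(\eta-\xi)+R(\xi,\eta),\qquad|R(\xi,\eta)|\le C|\eta-\xi|^2(|\xi|+|\eta|)^{\rho-2}.
\end{equation*}
Paired with the odd kernel $k_\kappa$, the linear term collapses to $\nabla a^\rho(\xi)$ contracted with a vector Fourier multiplier of order $\kappa-1$ applied to $\hat f$; since $|a^{-\rho}\nabla a^\rho|\le C|\xi|^{-1}$, Plancherel together with \eqref{equation:sw1} disposes of it. The remainder $R$ produces a Riesz-type kernel of order $2-\kappa<1$ (locally integrable) dressed with the weight $(|\xi|+|\eta|)^{\rho-2}$, which is controlled by Stein--Weiss after splitting the weight into the two factors $|\xi|^{\rho-2}$ and $|\eta|^{\rho-2}$ (or by a geometric-mean split when $\rho<2$ requires it).

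The main technical obstacle is the regime $1\le\kappa<3/2$: there cancellation from the oddness of $k_\kappa$ must be extracted via the second-order expansion, and the bookkeeping of Stein--Weiss indices in each piece must be done carefully to stay within $\alpha=\beta+\gamma$, $\beta,\gamma<n/2$, $0<\alpha<n$. The asymmetry between the ranges $\kappa<1$ (for $n=2$) and $\kappa<3/2$ (for $n\ge 3$) reflects precisely the failure of $a^\rho$ to be $C^1$ when $\rho=1/2$, so that only the first-order step (not the second) is available there.
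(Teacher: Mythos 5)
Your proposal is correct in outline, and for $0<\kappa<1$ it coincides with the paper's argument: compare with $r_\kappa(D_\xi)\hat f$ (whose norm is exactly $\lVert\,\lvert x\rvert^\kappa f\rVert_{L^2}$ by Plancherel), bound $\lvert a(\eta)^\rho-a(\xi)^\rho\rvert$ by a first-order difference --- via the mean value theorem for $\rho\geqslant1$ and the factorization/interpolation trick for $\rho=1/2$ --- and close with Stein--Weiss. Where you genuinely diverge is the range $1\leqslant\kappa<3/2$. The paper does \emph{not} Taylor-expand to second order; it factors $r_\kappa(D_\xi)=\lvert D_\xi\rvert^{\kappa-1}D_\xi$, commutes $D_\xi$ through $a^\rho$ exactly (producing the term $\rho\,a^{\rho-1}a'\hat f$, which is handled by Lemma~\ref{theorem:lem1} and Stein--Weiss), and applies the already-proven order-$(\kappa-1)$ case to $D_\xi\hat f=\widehat{-xf}$; the endpoint $\kappa=1$ is then the trivial exact identity $[D_\xi,a^\rho]=\rho a^{\rho-1}a'$. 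The advantage of the paper's route is that only \emph{first} derivatives of $a$ ever appear, always homogeneous of degree zero and hence bounded. Your second-order expansion, by contrast, invokes $\nabla^2a^\rho$, which is homogeneous of degree $\rho-2$ and singular at the origin, so the naive Taylor remainder bound along the segment $[\xi,\eta]$ fails when that segment passes near $0$. Your claimed estimate $\lvert R(\xi,\eta)\rvert\leqslant C\lvert\eta-\xi\rvert^2(\lvert\xi\rvert+\lvert\eta\rvert)^{\rho-2}$ is in fact true, but it requires the standard dichotomy ($\lvert\eta-\xi\rvert\leqslant\lvert\xi\rvert/2$ versus $\lvert\eta-\xi\rvert>\lvert\xi\rvert/2$, using homogeneity and crude size bounds in the second regime) rather than a one-line Taylor estimate; you should also say a word about the finite-part interpretation of the odd kernel of degree $-n-(\kappa-1)$ in the linear term. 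With those two points supplied, your Stein--Weiss bookkeeping (including the geometric-mean split of $(\lvert\xi\rvert+\lvert\eta\rvert)^{\rho-2}$ when $\rho<2$) does close the argument, so this is a viable alternative proof, just less economical than the paper's reduction to the lower-order case.
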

\begin{proof}
First we remark that the Fourier transform of $r_\kappa(x)$ is 
homogeneous of degree $-(n+\kappa)$. 
We split our proof into four cases: 
Case~1 ($n\geqslant3$, $0<\kappa<1$), 
Case~2 ($n=2$, $0<\kappa<1$), 
Case~3 ($n\geqslant3$, $\kappa=1$) 
and  
Case~4 ($n\geqslant3$, $1<\kappa<3/2$). 
\\
{\bf Case~1}. 
Suppose $n\geqslant3$ and $0<\kappa<1$. 
Note that $\rho\geqslant1$. 
We compare $a(\xi)^{-\rho}r_\kappa(D_\xi)a(\xi)^\rho\hat{f}(\xi)$ 
with $r_\kappa(D_\xi)f(x)$. 
We evaluate 
\begin{equation}
\left\lvert
\Bigl\{
a(\xi)^{-\rho}r_\kappa(D_\xi)a(\xi)^\rho
-
r_\kappa(D_\xi)
\Bigr\}
\hat{f}(\xi)
\right\rvert
\leqslant
C
\int_{\mathbb{R}^n}
\left\lvert
\frac{a(\zeta)^\rho}{a(\xi)^\rho}-1
\right\rvert
\frac{\lvert\hat{f}(\zeta)\rvert}{\lvert\xi-\zeta\rvert^{n+\kappa}}
d\zeta.
\label{equation:e11}
\end{equation}
The mean value theorem gives 
\begin{align}
  \frac{a(\zeta)^\rho}{a(\xi)^\rho}-1
& =
  \frac{a(\zeta)^\rho-a(\xi)^\rho}{a(\xi)^\rho}
\nonumber
\\
& =
  (a(\zeta)-a(\xi))
  \frac{\rho}{a(\xi)^\rho}
  \int_0^1
  \Bigl\{ta(\zeta)+(1-t)a(\xi)\Bigr\}^{\rho-1}
  dt
\nonumber
\\
& =
  \frac{\rho}{a(\xi)^\rho}
  \int_0^1
  \Bigl\{ta(\zeta)+(1-t)a(\xi)\Bigr\}^{\rho-1}
  dt
\nonumber
\\
& \qquad
  \times
  \int_0^1
  a^\prime(s\zeta+(1-s)\xi)\cdot(\zeta-\xi)
  ds.
\label{equation:e12}
\end{align}
Since $\rho-1\geqslant0$, 
\begin{equation}
0
\leqslant
\max_{t\in[0,1]}
\Bigl\{ta(\zeta)+(1-t)a(\xi)\Bigr\}^{\rho-1}
\leqslant
C\{a(\zeta)^{\rho-1}+a(\xi)^{\rho-1}\}.
\label{equation:e13}
\end{equation}
Since $a^\prime(\xi)$ is homogeneous of degree zero, 
\begin{equation}
\lvert{a^\prime(s\zeta+(1-s)\xi)}\rvert
\leqslant
\max_{\omega\in\mathbb{S}^{n-1}}
\lvert{a^\prime(\omega)}\rvert
<+\infty, 
\label{equation:e14}
\end{equation}
where 
$
\mathbb{S}^{n-1}
=
\{\xi\in\mathbb{R}^n\ \vert \ \lvert\xi\rvert=1\}
$. 
Combining 
\eqref{equation:e12}, 
\eqref{equation:e13} 
and 
\eqref{equation:e14}, 
we have 
\begin{equation}
\left\lvert
\frac{a(\zeta)^\rho}{a(\xi)^\rho}-1
\right\rvert
\leqslant
C
\left(
\frac{a(\zeta)^{\rho-1}}{a(\xi)^\rho}
+
\frac{1}{a(\xi)}
\right)
\lvert\xi-\zeta\rvert. 
\label{equation:e15}
\end{equation}
Substituting \eqref{equation:e15} into \eqref{equation:e11}, we deduce 
\begin{align}
& \left\lvert
  \Bigl\{
  a(\xi)^{-\rho}{\lvert{D_\xi}\rvert^\kappa}a(\xi)^\rho
  -
  \lvert{D_\xi}\rvert^\kappa
  \Bigr\}
  \hat{f}(\xi)
  \right\rvert
\nonumber
\\
& \leqslant
  \frac{C}{a(\xi)^\rho}
  \int_{\mathbb{R}^n}
  \frac{a(\zeta)^{\rho-1}\lvert\hat{f}(\zeta)\rvert}{\lvert\xi-\zeta\rvert^{n-(1-\kappa)}}
  d\zeta
  +
  \frac{C}{a(\xi)}
  \int_{\mathbb{R}^n}
  \frac{\lvert\hat{f}(\zeta)\rvert}{\lvert\xi-\zeta\rvert^{n-(1-\kappa)}}
  d\zeta
\nonumber
\\
& =
  C
  \Bigl\lvert
  a(\xi)^{-\rho}\lvert{D_\xi}\rvert^{-(1-\kappa)}a(\xi)^{\rho-1}\lvert\hat{f}(\xi)\rvert
  \Bigr\rvert
  +
  C
  \Bigl\lvert
  a(\xi)^{-1}\lvert{D_\xi}\rvert^{-(1-\kappa)}\lvert\hat{f}(\xi)\rvert
  \Bigr\rvert
\label{equation:e16}
\end{align}
Using 
\eqref{equation:sw1} with 
$(\alpha,\beta,\gamma)=(1-\kappa,\rho,1-\kappa-\rho)$ 
and 
\eqref{equation:sw2} with $\beta=\kappa$, 
we deduce 
\begin{align}
  \Bigl\lVert
  a(\xi)^{-\rho}\lvert{D_\xi}\rvert^{-(1-\kappa)}a(\xi)^{\rho-1}
  \lvert\hat{f}\rvert
  \Bigr\rVert_{L^2(\mathbb{R}^n)} 
& \leqslant
  C
  \Bigl\lVert
  a(\xi)^{(1-\kappa-\rho)+(\rho-1)}
  \lvert\hat{f}\rvert
  \Bigr\rVert_{L^2(\mathbb{R}^n)}
\nonumber
\\
& =
  C
  \Bigl\lVert
  a(\xi)^{-\kappa}\hat{f}
  \Bigr\rVert_{L^2(\mathbb{R}^n)} 
\nonumber
\\
& \leqslant
  C
  \Bigl\lVert
  \lvert{x}\rvert^\kappa{f}
  \Bigr\rVert_{L^2(\mathbb{R}^n)}. 
\label{equation:e17}
\end{align}
Similarly, 
using \eqref{equation:sw1} with 
$(\alpha,\beta,\gamma)=(1-\kappa,1,-\kappa)$ 
and \eqref{equation:sw2} with $\beta=\kappa$, 
we deduce 
\begin{align}
  \Bigl\lVert
  a(\xi)^{-1}\lvert{D_\xi}\rvert^{-(1-\kappa)}
  \lvert\hat{f}\rvert
  \Bigr\rVert_{L^2(\mathbb{R}^n)} 
& \leqslant
  C
  \Bigl\lVert
  a(\xi)^{-\kappa}
  \lvert\hat{f}\rvert
  \Bigr\rVert_{L^2(\mathbb{R}^n)}
\nonumber
\\
& =
  C
  \Bigl\lVert
  a(\xi)^{-\kappa}\hat{f}
  \Bigr\rVert_{L^2(\mathbb{R}^n)} 
\nonumber
\\
& \leqslant
  C
  \Bigl\lVert
  \lvert{x}\rvert^\kappa{f}
  \Bigr\rVert_{L^2(\mathbb{R}^n)}. 
\label{equation:e18}
\end{align}
Applying 
\eqref{equation:e17} and \eqref{equation:e18} to \eqref{equation:e16}, 
we obtain \eqref{equation:com3} for $n\geqslant3$ and $0<\kappa<1$.
\\
{\bf Case~2}.
Suppose $n=2$ and $0<\kappa<1$. 
Note that $\rho=1/2$. 
We evaluate 
\begin{equation}
\left\lvert
\Bigl\{
a(\xi)^{-\rho}r_\kappa(D_\xi)a(\xi)^\rho
-
r_\kappa(D_\xi)
\Bigr\}
\hat{f}(\xi)
\right\rvert
\leqslant
C
\int_{\mathbb{R}^2}
\left\lvert
\frac{a(\zeta)^{1/2}}{a(\xi)^{1/2}}-1
\right\rvert
\frac{\lvert\hat{f}(\zeta)\rvert}{\lvert\xi-\zeta\rvert^{2+\kappa}}
d\zeta.
\label{equation:e19}
\end{equation}
Using factorization and the mean value theorem, we deduce 
\begin{align}
  \frac{a(\zeta)^{1/2}}{a(\xi)^{1/2}}-1 
& =
  \frac{a(\zeta)^{1/2}-a(\zeta)^{1/2}}{a(\xi)^{1/2}}
\nonumber
\\
& =
  \frac{a(\zeta)-a(\xi)}{a(\xi)^{1/2}\{a(\zeta)^{1/2}+a(\xi)^{1/2}\}}
\nonumber
\\
& =
  \frac{1}{a(\xi)^{1/2}\{a(\zeta)^{1/2}+a(\xi)^{1/2}\}}
\nonumber
\\
& \qquad
  \times
  \int_0^1
  a^\prime(s\zeta+(1-s)\xi)\cdot(\zeta-\xi)
  ds.
\end{align}
Then we have 
\begin{equation}
\left\lvert
\frac{a(\zeta)^{1/2}}{a(\xi)^{1/2}}-1
\right\rvert
\leqslant
\frac{C\lvert\xi-\zeta\rvert}{a(\xi)^{1/2}a(\zeta)^{1/2}}.
\label{equation:e20} 
\end{equation}
Applying \eqref{equation:e20} to \eqref{equation:e19}, we deduce 
\begin{equation}
\left\lvert
\Bigl\{
a(\xi)^{-1/2}r_\kappa(D_\xi)a(\xi)^{1/2}
-
r_\kappa(D_\xi)
\Bigr\}
\hat{f}(\xi)
\right\rvert
\leqslant
C
\left\lvert
a(\xi)^{-1/2}\lvert{D_\xi}\rvert^{1-\kappa}a(\xi)^{-1/2}
\lvert\hat{f}(\xi)\rvert
\right\rvert. 
\label{equation:e21}
\end{equation}
Applying 
\eqref{equation:sw1} with 
$(\alpha,\beta,\gamma)=(1-\kappa,1/2,1/2-\kappa)$ 
and 
\eqref{equation:sw2} with $\beta=\kappa$ to \eqref{equation:e21}, 
we deduce 
\begin{align*}
  \Bigl\lVert
  \Bigl\{
  a(\xi)^{-1/2}r_\kappa(D_\xi)a(\xi)^{1/2}
  -
  r_\kappa(D_\xi)
  \Bigr\}
  \hat{f}
  \Bigr\rVert_{L^2(\mathbb{R}^2)}
& \leqslant
  C
  \Bigl\lVert
  a(\xi)^{(1/2-\kappa)-1/2}\hat{f}
  \Bigr\rVert_{L^2(\mathbb{R}^2)}
\\
& =
  C
  \Bigl\lVert
  a(\xi)^{-\kappa}\hat{f}
  \Bigr\rVert_{L^2(\mathbb{R}^2)}
\\
& \leqslant
  C
  \Bigl\lVert
  \lvert{x}\rvert^\kappa{f}
  \Bigr\rVert_{L^2(\mathbb{R}^2)},
\end{align*}
which is desired.
\\
{\bf Case~3}. 
Suppose $n\geqslant3$ and $\kappa=1$. 
Since 
$$
\Bigl\{
a(\xi)^{-\rho}{D_\xi}a(\xi)^\rho-D_\xi
\Bigr\}
\hat{f}(\xi)
=
\rho
\frac{a^\prime(\xi)}{a(\xi)}
\hat{f}(\xi),
$$
using \eqref{equation:sw2} with $\beta=1$, we obtain 
\begin{align*}
  \Bigl\lVert
  \Bigl\{
  a(\xi)^{-\rho}{D_\xi}a(\xi)^\rho-D_\xi
  \Bigr\}
  \hat{f}
  \Bigr\rVert_{L^2(\mathbb{R}^n)}
& =
  \Bigl\lVert
  \rho{a^\prime(\xi)}a(\xi)^{-1}\hat{f}
  \Bigr\rVert_{L^2(\mathbb{R}^n)}
\\
& \leqslant
  \rho
  \max_{\omega\in\Sigma(1)}\lvert{a^\prime(\omega)}\rvert
  \Bigl\lVert
  a(\xi)^{-1}\hat{f}
  \Bigr\rVert_{L^2(\mathbb{R}^n)}
\\
& \leqslant
  C
  \Bigl\lVert
  \lvert{x}\rvert{f}
  \Bigr\rVert_{L^2(\mathbb{R}^n)}.
\end{align*}
\\
{\bf Case~4}. 
Suppose $n\geqslant3$ and $1<\kappa<3/2$. 
A simple computation gives 
\begin{align}
& \Bigl\{
  a(\xi)^{-\rho}\lvert{D_\xi}\rvert^{\kappa-1}D_\xi{a(\xi)^\rho}
  -
  \lvert{D_\xi}\rvert^{\kappa-1}D_\xi
  \Bigr\}
  \hat{f}(\xi)
\nonumber
\\
& =
  \Bigl\{
  a(\xi)^{-\rho}\lvert{D_\xi}\rvert^{\kappa-1}a(\xi)^\rho
  -
  \lvert{D_\xi}\rvert^{\kappa-1}
  \Bigr\}
  D_\xi\hat{f}(\xi)
\nonumber
\\
& +
  \rho{a(\xi)^{-\rho}}\lvert{D_\xi}\rvert^{\kappa-1}
  a(\xi)^{\rho-1}a^\prime(\xi)
  \hat{f}(\xi).
\label{equation:e22}
\end{align}
Using the results of Case~1, we deduce
\begin{align}
& \Bigl\lVert
  \Bigl\{
  a(\xi)^{-\rho}\lvert{D_\xi}\rvert^{\kappa-1}a(\xi)^\rho
  -
  \lvert{D_\xi}\rvert^{\kappa-1}
  \Bigr\}
  D_\xi\hat{f}
  \Bigr\rVert_{L^2(\mathbb{R}^n)}
\nonumber
\\
& \leqslant
  \Bigl\lVert
  \lvert{x}\rvert^{\kappa-1}xf
  \Bigr\rVert_{L^2(\mathbb{R}^n)}
  =
  \Bigl\lVert
  \lvert{x}\rvert^\kappa{f}
  \Bigr\rVert_{L^2(\mathbb{R}^n)},
  \label{equation:e23}
\end{align}
\begin{align}
& \Bigl\lVert
  \rho{a(\xi)^{-\rho}}\lvert{D_\xi}\rvert^{\kappa-1}
  a(\xi)^{\rho-1}a^\prime(\xi)
  \hat{f}
  \Bigr\rVert_{L^2(\mathbb{R}^n)} 
\nonumber
\\
& \leqslant
  \rho
  \Bigl\lVert
  \Bigl\{
  {a(\xi)^{-\rho}}\lvert{D_\xi}\rvert^{\kappa-1}a(\xi)^\rho
  -
  \lvert{D_\xi}\rvert^{\kappa-1}
  \Bigr\}
  \lvert\xi\rvert^{-1}b(\xi)
  \hat{f}
  \Bigr\rVert_{L^2(\mathbb{R}^n)} 
\nonumber
\\
& \qquad
  +
  \rho
  \Bigl\lVert
  \lvert\xi\rvert^{-1}b(\xi)\hat{f}
  \Bigr\rVert_{L^2(\mathbb{R}^n)} 
\nonumber
\\
& \leqslant
  C
  \Bigl\lVert
  \lvert{x}\rvert^{\kappa-1}b(D_x)\lvert{D_x}\rvert^{-1}f
  \Bigr\rVert_{L^2(\mathbb{R}^n)}, 
\label{equation:e24}
\end{align}
where $b(\xi)=a^\prime(\xi)\lvert\xi\rvert/a(\xi)$. 
Using \eqref{equation:com1} with $\delta=\kappa-1$, we have 
\begin{align}
& \Bigl\lVert
  \lvert{x}\rvert^{\kappa-1}b(D_x)\lvert{D_x}\rvert^{-1}f
  \Bigr\rVert_{L^2(\mathbb{R}^n)}
\nonumber
\\
& \leqslant
  \Bigl\lVert
  \Bigl\{
  \lvert{x}\rvert^{\kappa-1}b(D_x)
  -
  b(D_x)\lvert{x}\rvert^{\kappa-1}
  \Bigr\}
  \lvert{D_x}\rvert^{-1}f
  \Bigr\rVert_{L^2(\mathbb{R}^n)}
\nonumber
\\
& \qquad
  +
  \Bigl\lVert
   b(D_x)\lvert{x}\rvert^{\kappa-1}\lvert{D_x}\rvert^{-1}f
  \Bigr\rVert_{L^2(\mathbb{R}^n)}
\nonumber
\\
& \leqslant
  C
  \Bigl\lVert
  \lvert{x}\rvert^{\kappa-1}\lvert{D_x}\rvert^{-1}f
  \Bigr\rVert_{L^2(\mathbb{R}^n)}.
\label{equation:e25}  
\end{align}
Applying \eqref{equation:sw1} with 
$(\alpha,\beta,\gamma)=(1,-(\kappa-1),\kappa)$ 
to \eqref{equation:e25}, 
we have 
$$
\Bigl\lVert
\lvert{x}\rvert^{\kappa-1}\lvert{D_x}\rvert^{-1}f
\Bigr\rVert_{L^2(\mathbb{R}^n)}
\leqslant
\Bigl\lVert
\lvert{x}\rvert^\kappa{f}
\Bigr\rVert_{L^2(\mathbb{R}^n)}.
$$
Then we get 
\begin{equation}
\Bigl\lVert
\rho{a(\xi)^{-\rho}}
\lvert{D_\xi}\rvert^{\kappa-1}
a(\xi)^{\rho-1}a^\prime(\xi)
\hat{f}
\Bigr\rVert_{L^2(\mathbb{R}^n)}
\leqslant
C
\Bigl\lVert
\lvert{x}\rvert^\kappa{f}
\Bigr\rVert_{L^2(\mathbb{R}^n)}.
\label{equation:e26}
\end{equation}
Combining 
\eqref{equation:e22}, 
\eqref{equation:e23} 
and 
\eqref{equation:e26}, 
we obtain 
\eqref{equation:com3} 
for $n\geqslant3$ and $1<\kappa<3/2$. 
This completes the proof.
\end{proof}
%%%%%%
%%%%%%
%%%%%%
%%%%%%
%%%%%%
%%%%%%
\section{Restriction estimates}
\label{section:restriction}
In this section we prove Lemma~\ref{theorem:restriction}.
\begin{proof}[{\bf Proof of uniform trace estimates \eqref{equation:trace}}]
We split $\Sigma(\tau)$ into finite numbers of small surfaces 
given by graphs of functions as follows.  
Since 
$a^\prime(\xi){\in}C^\infty(\mathbb{R}^n\setminus\{0\})$, 
$a^\prime(\xi)\ne0$  for $\xi\ne0$, 
and 
$\Sigma(1)$ is compact, 
there exist finite numbers of closed sets 
$\Sigma(1,k)\subset\Sigma(1)$ 
and points 
$\omega_k\in\Sigma(1,k)$ ($k=1,\dotsc,l$) 
such that 
\begin{equation}
\Sigma(1)
=
\bigcup_{k=1}^l
\Sigma(1,k), 
\qquad
a^\prime(\omega_k){\cdot}a^\prime(\omega)
\geqslant
\frac{1}{2}\lvert{a^\prime(\omega_k)}\rvert^2
\quad\text{for}\quad \omega\in\Sigma(1,k). 
\label{equation:covering}
\end{equation}
Fix $k=1,\dotsc,l$. 
Using an appropriate rotation in $\xi\in\mathbb{R}^n$, 
we may assume 
$e_n=(0,\dotsc,0,1)=a^\prime(\omega_k)/\lvert{a^\prime(\omega_k)}\rvert$. 
Set 
$$
\Sigma(\tau,k)
=
\{
t\omega
\ \vert \ 
t>0, \ 
\omega\in\Sigma(1,k)
\}
\cap
\Sigma(\tau),
$$
$$
D(\tau,k)
=
\{
\xi^\prime\in\mathbb{R}^{n-1} 
\ \vert \ 
\text{there exists} \ 
\xi_n\in\mathbb{R}\ 
\text{such that} \ 
(\xi^\prime,\xi_n)\in\Sigma(\tau,k)
\}.
$$
The homogeneity of $a(\xi)$ implies that for $\tau>0$ 
$$
\Sigma(\tau,k)
=
\{
\tau\omega
\ \vert \ 
\omega\in\Sigma(1,k)
\},
\quad 
D(\tau,k)
=
\{\tau\xi^\prime \ \vert \ \xi^\prime{\in}D(1,k)\}
$$
Since $a(\xi)$ is positively homogeneous of degree one and 
\eqref{equation:covering}, 
$e_n$-direction is transversal to the small surface $\Sigma(\tau,k)$. 
Since 
$\lvert{\p{a(\omega_k)}/\p\xi_n}\rvert=\lvert{a^\prime(\omega_k)}\rvert\ne0$, 
the implicit function theorem shows that 
there exists a homogeneous function $g_k{\in}C^\infty(D(1,k))$ 
of degree zero such that 
\begin{equation}
\Sigma(1,k)
=
\{(\xi^\prime,g_k(\xi^\prime)) \ \vert \ \xi^\prime{\in}D(1,k)\} 
\end{equation}
provided that $\Sigma(1,k)$ is sufficiently small. 
Since $a(\xi^\prime,g_k(\xi^\prime))=1$ for $\xi^\prime{\in}D(1,k)$, 
we deduce 
$$
a\left(\xi^\prime,\tau{g_k}\left(\frac{\xi^\prime}{\tau}\right)\right)=\tau
\quad\text{for}\quad
\xi^\prime{\in}D(\tau,k).
$$
Set $g_{\tau,k}(\xi^\prime)=\tau{g_k}(\xi^\prime/\tau)$ for short. 
The uniqueness of the implicit function on $\Sigma(\tau,k)$ implies that 
$$
\Sigma(\tau,k)
=
\{(\xi^\prime,g_{\tau,k}(\xi^\prime))\ \vert \ \xi^\prime{\in}D(\tau,k)\}.
$$
\par
Now we evaluate $\lVert\hat{f}\rVert_{L^2(\Sigma(\tau,k))}$. 
Set $\nabla_{\xi^\prime}=\p/\p\xi^\prime$ for short. 
Note that 
$$
\nabla_{\xi^\prime}g_{\tau,k}(\xi^\prime)
=
(\nabla_{\xi^\prime}g_k)\left(\frac{\xi^\prime}{\tau}\right)
\quad\text{for}\quad
\xi^\prime{\in}D(\tau,k). 
$$
Then we have for any $\tau>0$
\begin{equation}
\max_{\xi^\prime{\in}D(\tau,k)}
\sqrt{1+\lvert\nabla_{\xi^\prime}g_{\tau,k}(\xi^\prime)\rvert^2}
=
\max_{\zeta^\prime{\in}D(1,k)}
\sqrt{1+\lvert\nabla_{\zeta^\prime}g_k(\zeta\prime)\rvert^2}
\equiv
M<+\infty.
\label{equation:jacobian} 
\end{equation}
Let $\sigma$ be the surface element on $\Sigma(\tau,k)$. 
Using \eqref{equation:jacobian}, 
the one-dimensional Sobolev embedding 
(See, e.g., \cite[Chapter~4]{taylor}) 
and 
the Plancherel formula, we deduce 
\begin{align*}
  \lVert\hat{f}\rVert_{L^2(\Sigma(\tau,k))}^2
& =
  \int_{\Sigma(\tau,k)}
  \lvert\hat{f}(\xi)\rvert^2
  d\sigma(\xi)
\\
& =
  \int_{D(\tau,k)}
  \lvert\hat{f}(\xi^\prime,g_{\tau,k}(\xi^\prime))\rvert^2
  \sqrt{1+\lvert\nabla_{\xi^\prime}g_{\tau,k}(\xi^\prime)\rvert^2}
  d\xi^\prime
\\
& \leqslant
  M 
  \int_{D(\tau,k)}
  \lvert\hat{f}(\xi^\prime,g_{\tau,k}(\xi^\prime))\rvert^2
  d\xi^\prime
\\
& \leqslant
  M 
  \int_{D(\tau,k)}
  \sup_{\xi_n\in\mathbb{R}}
  \lvert\hat{f}(\xi^\prime,\xi_n)\rvert^2
  d\xi^\prime
\\
& \leqslant
  C_{\theta,M}
  \int_{D(\tau,k)}
  \int_{\mathbb{R}}
  \lvert\langle{D_{\xi_n}}\rangle^{1/2+\theta}\hat{f}(\xi^\prime,\xi_n)\rvert^2
  d\xi^\prime
  d\xi_n
\\
& \leqslant
  C_{\theta,M}
  \int_{\mathbb{R}^n}
  \lvert\langle{D_{\xi_n}}\rangle^{1/2+\theta}\hat{f}(\xi)\rvert^2
  d\xi
\\
& =
  C_{\theta,M}
  \int_{\mathbb{R}^n}
  \lvert\langle{x_n}\rangle^{1/2+\theta}f(x)\rvert^2
  dx
\\
& \leqslant
  C_{\theta,M}
  \lVert\langle{x}\rangle^{1/2+\theta}f\rVert_{L^2(\mathbb{R}^n)}^2.
\end{align*}
Summing up the above estimates on $k$ up to $l$, we obtain \eqref{equation:trace}.  
\end{proof}
\begin{proof}[{\bf Proof of H\"older continuity \eqref{equation:hoelder}}]
In view of \eqref{equation:trace}, 
it suffices to show \eqref{equation:hoelder} only for 
$\lvert\tau-\lambda\rvert\leqslant1$. 
Fix $\omega\in\Sigma(1)$. 
The Sobolev embedding theorem shows that 
\begin{equation}
\Bigl\lvert
\tau^\rho\hat{f}(\tau\omega)-\lambda^\rho\hat{f}(\lambda\omega)
\Bigr\rvert 
\leqslant
C
\lvert\tau-\lambda\rvert^\theta
\left(
\int_0^\infty
\Bigl\{
\Bigl\lvert
\mu^\rho\hat{f}(\mu\omega)
\Bigr\rvert^2
+
\Bigl\lvert
\omega{\cdot}L_\mu
(\mu^\rho\hat{f}(\mu\omega))
\Bigr\rvert^2
\Bigr\}
d\mu
\right)^{1/2},
\label{equation:sobolev}
\end{equation}
where 
$L_\mu=r_{1/2+\theta}(D_\xi)$ for $\xi=\mu\omega$. 
Note that $d\xi=\mu^{2\rho}d\mu{d\sigma(\omega)}$ for $\xi=\mu\omega$. 
Integrating \eqref{equation:sobolev} over $\Sigma(1)$ 
and using \eqref{equation:com1}, we deduce 
\begin{align*}
& \Bigl\lVert
  \tau^\rho\hat{f}(\tau\cdot)-\lambda^\rho\hat{f}(\lambda\cdot)
  \Bigr\rVert_{L^2(\Sigma(1))}
\\
& \leqslant
  C
  \lvert\tau-\lambda\rvert^\theta 
  \left(
  \int_0^\infty
  \int_{\Sigma(1)}
  \Bigl\{
  \Bigl\lvert
  \mu^\rho\hat{f}(\mu\omega)
  \Bigr\rvert^2
  +
  \Bigl\lvert
  \omega{\cdot}L_u
  (\mu^\rho\hat{f}(\mu\omega))
  \Bigr\rvert^2
  \Bigr\}
  d\mu
  d\sigma(\omega)
  \right)^{1/2}
\\
& \leqslant
  C
  \lvert\tau-\lambda\rvert^\theta 
  \left(
  \int_0^\infty
  \int_{\Sigma(1)}
  \Bigl\{
  \Bigl\lvert
  \mu^\rho\hat{f}(\mu\omega)
  \Bigr\rvert^2
  +
  \Bigl\lvert
  L_u
  (\mu^\rho\hat{f}(\mu\omega))
  \Bigr\rvert^2
  \Bigr\}
  d\mu
  d\sigma(\omega)
  \right)^{1/2}
\\
& \leqslant
  C
  \lvert\tau-\lambda\rvert^\theta 
  \left\{
  \lVert\hat{f}\rVert_{L^2(\mathbb{R}^n)}
  +
  \Bigl\lVert
  a(\xi)^{-\rho}r_{1/2+\theta}(D_\xi)a(\xi)^\rho\hat{f}
  \Bigr\rVert_{L^2(\mathbb{R}^n)}
  \right\}
\\ 
& \leqslant
  C
  \lvert\tau-\lambda\rvert^\theta 
  \left\{
  \lVert{f}\rVert_{L^2(\mathbb{R}^n)}
  +
  \Bigl\lVert
  \lvert{x}\rvert^{1/2+\theta}f
  \Bigr\rVert_{L^2(\mathbb{R}^n)}
  \right\}
\\ 
& \leqslant
  C
  \lvert\tau-\lambda\rvert^\theta 
  \Bigl\lVert
  \langle{x}\rangle^{1/2+\theta}f
  \Bigr\rVert_{L^2(\mathbb{R}^n)}.
\end{align*}
This completes the proof.
\end{proof}
\begin{proof}[{\bf Proof of low frequency trace estimates \eqref{equation:low-trace}}]
In view of \eqref{equation:trace}, 
it suffices to show the case $\lvert\tau\rvert\leqslant1$. 
Suppose $0<\theta<1$ ($n=2$) or $0<\theta\leqslant1$ ($n\geqslant3$).  
Using \eqref{equation:hoelder}, we deduce
\begin{align}
  \lVert\hat{f}\rVert_{L^2(\Sigma(\tau))}
& =
  \Bigl\lVert\tau^\rho\hat{f}(\tau\cdot)\Bigr\rVert_{L^2(\Sigma(1))}
\nonumber
\\
& =
  \Bigl\lVert\tau^\rho\hat{f}(\tau\cdot)-0\Bigr\rVert_{L^2(\Sigma(1))}
\nonumber
\\
& \leqslant
  C
  \lvert\tau-0\rvert^\theta 
  \Bigl\lVert
  \langle{x}\rangle^{1/2+\theta}f
  \Bigr\rVert_{L^2(\mathbb{R}^n)}
\nonumber
\\
& \leqslant 
  C
  \lvert\tau\rvert^\theta 
  \Bigl\lVert
  \langle{x}\rangle^{1/2+\theta}f
  \Bigr\rVert_{L^2(\mathbb{R}^n)}.
\label{equation:e31}
\end{align}
\par
Consider the case $1\leqslant\theta<(n-1)/2$ ($n\geqslant4$). 
Set $\nu=\theta-1/2$ for short.  
Applying \eqref{equation:e31} with ``$\theta=1/2$'' to 
$\tau^{-\nu}\hat{f}(\tau\omega)$, 
we deduce 
\begin{align}
  \lVert\hat{f}\rvert_{L^2(\Sigma(\tau))}
& =
  \tau^\nu
  \Bigl\lVert\tau^{-\nu}\hat{f}(\tau\cdot)\Bigr\rVert_{L^2(\Sigma(\tau))}
\nonumber 
\\
& \leqslant
  C\tau^{\nu+1/2}
  \Bigl\lVert
  (1+\lvert{x}\rvert)
  a(D_x)^{-\nu}f 
  \Bigr\rVert_{L^2(\mathbb{R}^n)} 
\nonumber
\\
& \leqslant
  C\tau^{\theta}
  \left(
  \Bigl\lVert
  a(D_x)^{-\nu}f 
  \Bigr\rVert_{L^2(\mathbb{R}^n)} 
  +
  \Bigl\lVert
  \lvert{x}\rvert
  a(D_x)^{-\nu}f 
  \Bigr\rVert_{L^2(\mathbb{R}^n)} 
  \right).
\label{equation:e32}
\end{align}
Using \eqref{equation:sw2} with $\beta=\nu$, we have 
\begin{equation}
\Bigl\lVert
a(D_x)^{-\nu}f 
\Bigr\rVert_{L^2(\mathbb{R}^n)} 
\leqslant
C
\Bigl\lVert
\lvert{x}\rvert^{\theta-1/2}
\Bigr\rVert_{L^2(\mathbb{R}^n)}.
\label{equation:e33} 
\end{equation}
Using \eqref{equation:com1} 
with $\delta=1$ and $q(\xi)=a(\xi)^{-\nu}\lvert\xi\rvert^\nu$ 
and 
\eqref{equation:sw1} with $(\alpha,\beta,\gamma)=(\nu,-1,\nu+1)$, 
we deduce 
\begin{align}
  \Bigl\lVert
  \lvert{x}\rvert
  a(D_x)^{-\nu}f 
  \Bigr\rVert_{L^2(\mathbb{R}^n)} 
& =
  \Bigl\lVert
  \lvert{x}\rvert
  (a(D_x)^{-\nu}\lvert{D_x}\rvert^\nu)
  \lvert{D_x}\rvert^{-\nu}f 
  \Bigr\rVert_{L^2(\mathbb{R}^n)}
\nonumber
\\
& \leqslant
  C
  \Bigl\lVert
  \lvert{x}\rvert
  \lvert{D_x}\rvert^{-\nu}f 
  \Bigr\rVert_{L^2(\mathbb{R}^n)} 
\nonumber
\\
& \leqslant
  C
  \Bigl\lVert
  \lvert{x}\rvert^{\nu+1}f 
  \Bigr\rVert_{L^2(\mathbb{R}^n)} 
\nonumber
\\
& \leqslant
  C
  \Bigl\lVert
  \lvert{x}\rvert^{1/2+\theta}f 
  \Bigr\rVert_{L^2(\mathbb{R}^n)}.
\label{equation:e34}
\end{align}
Combining 
\eqref{equation:e32}, 
\eqref{equation:e33} 
and 
\eqref{equation:e34}, 
we obtain \eqref{equation:low-trace} for $1\leqslant\theta<(n-1)/2$. 
This completes the proof.
\end{proof}
%
%
%%%%%%
%%%%%%
%%%%%%
%%%%%%
%%%%%%
%%%%%%
\section{Smoothing estimates}
\label{section:smoothing}
In this section we prove the resolvent estimates \eqref{equation:resolvent1}. 
Obviously, it suffices to show \eqref{equation:resolvent1} for $1/2<\delta<1$. 
Let $b(\xi){\in}C^\infty(\mathbb{R}^n\setminus\{0\})$ be a real-valued symbol. 
We remark that for $\zeta\in\mathbb{C}\setminus\mathbb{R}$ 
\begin{align}
&  \Bigl(b(D_x)(\zeta-p(D_x))^{-1}f,g\Bigr)_{L^2(\mathbb{R}^n)}
\nonumber
\\
& =
  \frac{1}{4}
  \Bigl(b(D_x)(\zeta-p(D_x))^{-1}(f+g),(f+g)\Bigr)_{L^2(\mathbb{R}^n)}
\nonumber
\\
& +
  \frac{1}{4}
  \Bigl(b(D_x)(\zeta-p(D_x))^{-1}(f-g),(f-g)\Bigr)_{L^2(\mathbb{R}^n)}
\nonumber
\\
& +
  \frac{i}{4}
  \Bigl(b(D_x)(\zeta-p(D_x))^{-1}(f+ig),(f+ig)\Bigr)_{L^2(\mathbb{R}^n)}
\nonumber
\\
& +
  \frac{i}{4}
  \Bigl(b(D_x)(\zeta-p(D_x))^{-1}(f-ig),(f-ig)\Bigr)_{L^2(\mathbb{R}^n)}.
\label{equation:nanako}
\end{align}
Hence, it suffices to show \eqref{equation:resolvent1} for $f=g$. 
In view of \eqref{equation:com2} for 
$q(\xi)=\lvert{p^\prime(\xi)}\rvert^{-1/2}\lvert\xi\rvert^{(m-1)/2}$, 
the proof of \eqref{equation:resolvent1} is reduced to the following. 
\begin{lemma}
\label{theorem:lem23}
Let $n\geqslant2$. Suppose $m>1$ and $\delta>1/2$. Then, 
\begin{equation}
\sup_{\zeta\in\mathbb{C}\setminus\mathbb{R}}
\Bigl\lvert
\Bigl(
\lvert{p^\prime(D_x)}\rvert(\zeta-p(D_x))^{-1}f,g
\Bigr)_{L^2(\mathbb{R}^n)}
\Bigr\rvert
\leqslant
C
\lVert
\langle{x}\rangle^{\delta}f
\rVert_{L^2(\mathbb{R}^n)}
\lVert
\langle{x}\rangle^{\delta}g
\rVert_{L^2(\mathbb{R}^n)}.
\label{equation:resolvent4} 
\end{equation}
\end{lemma}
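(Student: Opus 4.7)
The plan is to polarize, reduce to a one-dimensional spectral integral via the coarea formula, and then exploit all three parts of Lemma~\ref{theorem:restriction}. By the polarization identity \eqref{equation:nanako} it suffices to prove \eqref{equation:resolvent4} for $f=g$. Writing $R(\zeta)=(\zeta-p(D_x))^{-1}$ as a Fourier multiplier, the coarea formula gives
$$
\Bigl(|p^\prime(D_x)|R(\zeta)f,f\Bigr)_{L^2(\mathbb{R}^n)}=\int_0^\infty\frac{F(\tau)}{\zeta-\tau}\,d\tau,\qquad F(\tau):=\int_{\Sigma(\tau^{1/m})}|\hat f(\xi)|^2\,d\sigma(\xi).
$$
After the substitution $\tau=s^m$ and the partial fraction decomposition $m s^{m-1}/(\zeta-s^m)=\sum_{k=1}^m(\mu_k-s)^{-1}$, where $\mu_1,\dots,\mu_m$ are the $m$-th roots of $\zeta$, this integral equals $\sum_{k=1}^m J(\mu_k)$ with
$$
J(\mu):=\int_0^\infty\frac{G(s)}{\mu-s}\,ds,\qquad G(s):=F(s^m).
$$
Since $\zeta\notin\mathbb{R}$, none of the $\mu_k$ lies on $[0,\infty)$, so it suffices to bound $|J(\mu)|$ uniformly in $\mu\in\mathbb{C}\setminus[0,\infty)$.

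Fix $\theta>0$ with $\delta=1/2+\theta$ in the common range of both \eqref{equation:hoelder} and \eqref{equation:low-trace}. Writing $G(s)=\|\Phi_f(s)\|_{L^2(\Sigma(1))}^2$ with $\Phi_f(s)(\omega):=s^{(n-1)/2}\hat f(s\omega)$, Lemma~\ref{theorem:restriction} translates into (a)~$\|G\|_{L^\infty(0,\infty)}\leqslant C\|\langle x\rangle^\delta f\|_{L^2}^2$, (b)~$|G(s_1)-G(s_2)|\leqslant C|s_1-s_2|^\theta\|\langle x\rangle^\delta f\|_{L^2}^2$, and (c)~$|G(s)|\leqslant Cs^{2\theta}\|\langle x\rangle^\delta f\|_{L^2}^2$ for $s\leqslant 1$. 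The polar identity $d\xi=|a^\prime(\omega)|^{-1}\,d\sigma(\xi)\,ds$ on $\Sigma(s)$ further yields the Plancherel-type identity $\int_0^\infty G(s)\,ds\simeq\|f\|_{L^2}^2$, which gives (d)~$\|G\|_{L^1(0,\infty)}\leqslant C\|f\|_{L^2}^2$.

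With these four properties in hand, if $\re\mu<0$ then $|\mu-s|\geqslant s$ for $s>0$, so $|J(\mu)|\leqslant\int_0^\infty|G(s)|s^{-1}\,ds$, which is finite by (c) near $s=0$ (since $G(s)s^{-1}\leqslant Cs^{2\theta-1}$ with $2\theta>0$) and by (d) on $[1,\infty)$. If $\re\mu=\alpha\geqslant 0$, I would fix a smooth bump $\chi\equiv 1$ on $[-1,1]$ with $\supp\chi\subset[-2,2]$, set $\chi_\alpha(s):=\chi(s-\alpha)$, and decompose $J(\mu)$ as
$$
\int\frac{(G(s)-G(\alpha))\chi_\alpha(s)}{\mu-s}\,ds+G(\alpha)\int\frac{\chi_\alpha(s)}{\mu-s}\,ds+\int\frac{G(s)(1-\chi_\alpha(s))}{\mu-s}\,ds.
$$
The first summand is controlled by (b) because $\int|s-\alpha|^{\theta-1}\chi_\alpha(s)\,ds<\infty$; the second is the product of the bounded $G(\alpha)$ (by (a)) with the Cauchy transform of a smooth compactly supported bump, which is uniformly bounded in $\mu$; the third is controlled by $\|G\|_{L^1}$ via $|\mu-s|\geqslant 1$ on $\supp(1-\chi_\alpha)$. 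The main obstacle is uniformity as $\alpha\downarrow 0$, where $\chi_\alpha$ crosses the endpoint $s=0$ and the standard Cauchy-transform estimate for the second integral degenerates logarithmically; the low-frequency decay (c) provides $|G(\alpha)|\leqslant C\alpha^{2\theta}$, which exactly absorbs that loss. Summing over $k=1,\dots,m$ then gives \eqref{equation:resolvent4}.
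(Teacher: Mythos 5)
Your strategy is genuinely different in organization from the paper's and its analytic core is sound: you change variables to $s=\tau^{1/m}$ so that the density $G(s)=\lVert\hat f\rVert_{L^2(\Sigma(s))}^2$ is \emph{uniformly} H\"older continuous, and reduce everything to a single Cauchy kernel $(\mu-s)^{-1}$ with $\mu\notin[0,\infty)$. The four properties (a)--(d) you extract from Lemma~\ref{theorem:restriction} and the coarea identity are all correct, and your three-term decomposition with the bump $\chi_\alpha$ --- including the observation that the logarithmic degeneration of the Cauchy transform truncated to $[0,\infty)$ as $\alpha\downarrow0$ is exactly absorbed by the low-frequency decay $G(\alpha)\leqslant C\alpha^{2\theta}$ --- gives a correct, uniform bound on $J(\mu)$. (The paper, by contrast, does not need the low-frequency trace estimate for this lemma at all; it handles $\lambda\leqslant0$ and the non-resonant $\tau$-regions by Stein--Weiss and homogeneity of $\lvert p'\rvert/p$.)

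The genuine gap is the partial-fraction step. The paper takes $p(\xi)=a(\xi)^m$ for an arbitrary \emph{real} $m>1$, not an integer (TYPE-II even requires $1<m<n$, and fractional powers $\lvert\xi\rvert^{\alpha}$ are among the intended examples). The identity $ms^{m-1}/(\zeta-s^m)=\sum_{k=1}^m(\mu_k-s)^{-1}$ is meaningless for non-integer $m$, so your reduction to $\sum_k J(\mu_k)$ proves the lemma only for $m\in\{2,3,\dots\}$. This is precisely the difficulty the paper's case analysis is designed to avoid: staying in the variable $\tau=p(\xi)$, the H\"older modulus of $F(\tau)=G(\tau^{1/m})$ degrades for large $\lambda$ as $\lambda^{-\theta(m-1)/m}\lvert\tau-\lambda\rvert^\theta$ by \eqref{equation:e50}, and the paper compensates by shrinking the resonant window from width $\lambda$ (Case~2, $\lambda\leqslant 2^m$) to width $\lambda^{(m-1)/m}$ (Case~3, $\lambda>2^m$) in \eqref{equation:e53}, together with the odd-kernel cancellation \eqref{equation:vanishing}. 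Your change of variables buys uniform H\"older continuity and a fixed-width window, but at the cost of integrality of $m$. To repair the argument for general $m>1$ you must either show directly that the kernel $ms^{m-1}/(\zeta-s^m)$ on $[0,\infty)$ is a single Cauchy-type kernel near $s=\lvert\zeta\rvert^{1/m}$ plus a remainder controlled by $\lVert G\rVert_{L^1}+\lVert G/s\rVert_{L^1(0,1)}$ (true, but it requires an argument), or revert to the paper's $\lambda$-dependent windows.
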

\begin{proof}
It suffices to show \eqref{equation:resolvent4} for $f=g$ and $1/2<\delta<1$. 
For the sake of convenience, 
set $\zeta=\lambda\pm{i}\eta$ with $\lambda\in\mathbb{R}$ and $\eta>0$. 
Using the Plancherel formula and the co-area formula, we have 

\begin{align}
& \Bigl(
  \lvert{p^\prime(D_x)}\rvert(\lambda\pm{i}\eta-p(D_x))^{-1}f,f
  \Bigr)_{L^2(\mathbb{R}^n)}
\nonumber
\\
& =
  \int_{\mathbb{R}^n}
  \frac{\lvert{p^\prime(\xi)}\rvert}{\lambda\pm{i}\eta-p(\xi)}
  \lvert\hat{f}(\xi)\rvert^2
  d\xi
\nonumber
\\
& =
  \int_0^\infty
  \frac{1}{(\lambda-\tau)\pm{i}\eta}
  \int_{p(\xi)=\tau}
  \lvert\hat{f}(\xi)\rvert^2
  d\sigma(\xi)
  d\tau
\nonumber
\\
& =
  \int_0^\infty
  \frac{(\lambda-\tau)\mp{i}\eta}{(\lambda-\tau)^2+\eta^2}
  \int_{p(\xi)=\tau}
  \lvert\hat{f}(\xi)\rvert^2
  d\sigma(\xi)
  d\tau.
\label{equation:e42}
\end{align}
Applying \eqref{equation:trace} to the imaginary part of \eqref{equation:e42}, 
we obtain 
\begin{align}
& \left\lvert
  \im
  \Bigl(
  \lvert{p^\prime(D_x)}\rvert(\lambda\pm{i}\eta-p(D_x))^{-1}f,f
  \Bigr)_{L^2(\mathbb{R}^n)}
  \right\rvert
\nonumber
\\
& =
  \int_0^\infty
  \frac{\eta}{(\lambda-\tau)^2+\eta^2}
  \int_{p(\xi)=\tau}
  \lvert\hat{f}(\xi)\rvert^2
  d\sigma(\xi)
  d\tau
\nonumber
\\
& =
  \int_0^\infty
  \frac{\eta}{(\lambda-\tau)^2+\eta^2}
  \lVert\hat{f}\rVert_{L^2(\Sigma(\tau^{1/m}))}^2
  d\tau
\nonumber
\\
& \leqslant
  C
  \lVert\langle{x}\rangle^\delta{f}\rVert_{L^2(\mathbb{R}^n)}^2
  \int_0^\infty
  \frac{\eta}{(\lambda-\tau)^2+\eta^2}
  d\tau
\nonumber
\\
& \leqslant
  C
  \lVert\langle{x}\rangle^\delta{f}\rVert_{L^2(\mathbb{R}^n)}^2
  \int_{\mathbb{R}}
  \frac{\eta}{(\lambda-\tau)^2+\eta^2}
  d\tau
\nonumber
\\
& =
  C\pi
  \lVert\langle{x}\rangle^\delta{f}\rVert_{L^2(\mathbb{R}^n)}^2.
\label{equation:e43}
\end{align}
According to $\lambda\in\mathbb{R}$, 
we split the evaluation of the real part of \eqref{equation:e42} 
into three cases: 
Case~1 ($\lambda\leqslant0$), 
Case~2 ($0<\lambda\leqslant2^m$) 
and 
Case~3 ($\lambda>2^m$). 
\\
{\bf Case~1}. 
Suppose $\lambda\leqslant0$. 
Set $\mu=-\lambda\geqslant0$. 
Using \eqref{equation:sw2} with $\beta=1/2$, we deduce 
\begin{align}
& \left\lvert
  \re
  \Bigl(
  \lvert{p^\prime(D_x)}\rvert(\lambda\pm{i}\eta-p(D_x))^{-1}f,f
  \Bigr)_{L^2(\mathbb{R}^n)}
  \right\rvert
\nonumber
\\
& =
  \int_0^\infty
  \frac{\mu+\tau}{(\mu+\tau)^2+\eta^2}
  \int_{p(\xi)=\tau}
  \lvert\hat{f}(\xi)\rvert^2
  d\sigma(\xi)
  d\tau
\nonumber
\\
& \leqslant
  \int_0^\infty
  \frac{1}{\mu+\tau}
  \int_{p(\xi)=\tau}
  \lvert\hat{f}(\xi)\rvert^2
  d\sigma(\xi)
  d\tau
\nonumber
\\
& \leqslant
  \int_0^\infty
  \frac{1}{\tau}
  \int_{p(\xi)=\tau}
  \lvert\hat{f}(\xi)\rvert^2
  d\sigma(\xi)
  d\tau
\nonumber
\\
& =
  \int_{\mathbb{R}^n}
  \frac{\lvert{p^\prime(\xi)}\rvert}{p(\xi)}
  \lvert\hat{f}(\xi)\rvert^2
  d\xi
\nonumber
\\
& \leqslant
  C
  \Bigl\lVert\lvert\xi\rvert^{-1/2}\hat{f}\Bigr\rVert^2_{L^2(\mathbb{R}^n)}
\nonumber
\\
& \leqslant
  C
  \Bigl\lVert\lvert{x}\rvert^{1/2}f\Bigr\rVert^2_{L^2(\mathbb{R}^n)}.
\label{equation:e44}
\end{align}
{\bf Case~2}. 
Suppose $0<\lambda\leqslant2^m$. 
We split the real part of \eqref{equation:e42} into three parts 
according to the size of $\tau$ as follows. 
\begin{align}
& \left\lvert
  \re
  \Bigl(
  \lvert{p^\prime(D_x)}\rvert(\lambda\pm{i}\eta-p(D_x))^{-1}f,f
  \Bigr)_{L^2(\mathbb{R}^n)}
  \right\rvert
\nonumber
\\
& =
  \left\lvert
  \int_0^\infty
  \frac{\lambda-\tau}{(\lambda-\tau)^2+\eta^2}
  \int_{p(\xi)=\tau}
  \lvert\hat{f}(\xi)\rvert^2
  d\sigma(\xi)
  d\tau
  \right\rvert
\nonumber
\\
& \leqslant
  \left\lvert
  \int_0^{\lambda/2}\dotsb{d}\tau 
  \right\rvert
  +
  \left\lvert
  \int_{\lambda/2}^{3\lambda/2}\dotsb{d}\tau 
  \right\rvert
  +
  \left\lvert
  \int_{3\lambda/2}^\infty\dotsb{d}\tau 
  \right\rvert 
\nonumber
\\
& =
  I_1+I_2+I_3.
\label{equation:e45}
\end{align}
It is easy to deal with $I_1$ and $I_3$. 
In fact, since 
$\lambda-\tau\geqslant\tau$ for $\tau\leqslant\lambda/2$, 
and 
$\tau-\lambda\geqslant\tau/3$ for $\tau\geqslant3\lambda/2$, 
we have 
$$
\frac{\lvert\lambda-\tau\rvert}{(\lambda-\tau)^2+\eta^2}
\leqslant
\frac{1}{\lvert\lambda-\tau\rvert}
\leqslant
\frac{3}{\tau}
$$
for $\tau\not\in(\lambda/2,3\lambda/2)$. 
In the same way as \eqref{equation:e44}, we can obtain 
\begin{equation}
I_1, I_3 
\leqslant 
C
\Bigl\lVert\lvert{x}\rvert^{1/2}f\Bigr\rVert^2_{L^2(\mathbb{R}^n)}. 
\label{equation:e46}
\end{equation}
\par
The estimate of $I_2$ is delicate. 
Since
\begin{equation}
\int_{\lambda/2}^{3\lambda/2}
\frac{\lambda-\tau}{(\lambda-\tau)^2+\eta^2}
d\tau
=
-
\int_{-\lambda/2}^{\lambda/2}
\frac{\mu}{\mu^2+\sigma^2}
d\mu
=0
\label{equation:vanishing}
\end{equation}
for $\lambda,\eta>0$, we have 
\begin{align}
  I_2
& =
  \left\lvert
  \int_{\lambda/2}^{3\lambda/2}
  \frac{\lambda-\tau}{(\lambda-\tau)^2+\eta^2}
  F_1(\tau,\lambda)
  d\tau
  \right\rvert,
\label{equation:e48}
\\
  F_1(\tau,\lambda)
& =
  \int_{p(\xi)=\tau}
  \lvert\hat{f}(\xi)\rvert^2
  d\sigma(\xi)
  -
  \int_{p(\xi)=\lambda}
  \lvert\hat{f}(\xi)\rvert^2
  d\sigma(\xi).
\nonumber
\end{align}
Applying \eqref{equation:trace} and 
\eqref{equation:hoelder} to $F_1(\tau,\lambda)$, 
we deduce 
\begin{align}
  \lvert{F_1(\tau,\lambda)}\rvert 
& =
  \Bigl\lvert
  \lVert\tau^{\rho/m}\hat{f}(\tau\cdot)\rVert_{L^2(\Sigma(1))}
  -
  \lVert\lambda^{\rho/m}\hat{f}(\lambda\cdot)\rVert_{L^2(\Sigma(1))}
  \Bigr\rvert
\nonumber
\\
& \qquad
  \times
  \Bigl(
  \lVert\hat{f}\rVert_{L^2(\Sigma(\tau^{1/m}))}
  +
  \lVert\hat{f}\rVert_{L^2(\Sigma(\lambda^{1/m}))}
  \Bigr)
\nonumber
\\
& \leqslant
  \lVert
  \tau^{\rho/m}\hat{f}(\tau\cdot)
  -
  \lambda^{\rho/m}\hat{f}(\lambda\cdot)
  \rVert_{L^2(\Sigma(1))}
\nonumber
\\
& \qquad
  \times
  \Bigl(
  \lVert\hat{f}\rVert_{L^2(\Sigma(\tau^{1/m}))}
  +
  \lVert\hat{f}\rVert_{L^2(\Sigma(\lambda^{1/m}))}
  \Bigr)
\nonumber
\\
& \leqslant
  C
  \lvert\tau^{1/m}-\lambda^{1/m}\rvert^\theta
  \lVert\langle{x}\rangle^\delta{f}\rVert_{L^2(\mathbb{R}^n)}^2,
\label{equation:e49}
\end{align}
where $\theta=\delta-1/2$. 
The mean value theorem shows that for 
$\lambda/2\leqslant\tau\leqslant3\lambda/2$ 
\begin{align}
  \lvert\tau^{1/m}-\lambda^{1/m}\rvert 
& =
  \frac{\lvert\tau-\lambda\rvert}{m}
  \int_0^1
  \Bigl(\lambda+t(\tau-\lambda)\Bigr)^{-(m-1)/m}
  dt
\nonumber
\\
& \leqslant
  \frac{\lvert\tau-\lambda\rvert}{m}
  \int_0^1
  \left(\frac{\lambda}{2}\right)^{-(m-1)/m}
  dt
\nonumber
\\
& =
  \frac{2^{(m-1)/m}}{m}\lambda^{-(m-1)/m}\lvert\tau-\lambda\rvert.
\label{equation:e50}
\end{align}
Substituting \eqref{equation:e50} into \eqref{equation:e49}, we have 
\begin{equation}
\lvert{F_1(\tau,\lambda)}\rvert
\leqslant
C
\lambda^{-\theta(m-1)/m}
\lvert\lambda-\tau\rvert^\theta
\lVert\langle{x}\rangle^\delta{f}\rVert_{L^2(\mathbb{R}^n)}^2
\label{equation:e47}
\end{equation}
for $\lambda/2\leqslant\tau\leqslant3\lambda/2$. 
Substituting \eqref{equation:e47} into \eqref{equation:e48}, we get 
\begin{align}
  I_2
& \leqslant
  C
  \lambda^{-\theta(m-1)/m}
  \lVert\langle{x}\rangle^\delta{f}\rVert_{L^2(\mathbb{R}^n)}^2
  \int_{\lambda/2}^{3\lambda/2}
  \lvert\lambda-\tau\rvert^{\theta-1}
  d\tau
\nonumber
\\
& =
  2C
  \lambda^{-\theta(m-1)/m}
  \lVert\langle{x}\rangle^\delta{f}\rVert_{L^2(\mathbb{R}^n)}^2
  \int_0^{\lambda/2}
  \mu^{\theta-1}
  d\mu
\nonumber
\\
& =
  \frac{2C}{\theta}
  \lambda^{\theta/m}
  \lVert\langle{x}\rangle^\delta{f}\rVert_{L^2(\mathbb{R}^n)}^2
\nonumber
\\
& \leqslant
  \frac{2^{1+\theta}C}{\theta}
  \lVert\langle{x}\rangle^\delta{f}\rVert_{L^2(\mathbb{R}^n)}^2.
\label{equation:e51}
\end{align}
\par
Combining \eqref{equation:e45}, \eqref{equation:e46} and \eqref{equation:e51}, 
we obtain \eqref{equation:resolvent4} for $f=g$ and $0<\re{\zeta}\leqslant2^m$. 
\\
{\bf Case~3}. 
Suppose $\lambda>2^m$. 
Case~3 is slightly different from Case~2. 
We split the real part of \eqref{equation:e42} into four parts as follows. 
\begin{align}
& \left\lvert
  \re
  \Bigl(
  \lvert{p^\prime(D_x)}\rvert(\lambda\pm{i}\eta-p(D_x))^{-1}f,f
  \Bigr)_{L^2(\mathbb{R}^n)}
  \right\rvert
\nonumber
\\
& =
  \left\lvert
  \int_0^\infty
  \frac{\lambda-\tau}{(\lambda-\tau)^2+\eta^2}
  \int_{p(\xi)=\tau}
  \lvert\hat{f}(\xi)\rvert^2
  d\sigma(\xi)
  d\tau
  \right\rvert
\nonumber
\\
& \leqslant
  \left\lvert
  \int_0^{\lambda-\lambda^{(m-1)/m}}\dotsb{d}\tau 
  \right\rvert
  +
  \left\lvert
  \int_{\lambda-\lambda^{(m-1)/m}}^{\lambda+\lambda^{(m-1)/m}}\dotsb{d}\tau 
  \right\rvert
\nonumber
\\
& \qquad 
  +
  \left\lvert
  \int_{\lambda+\lambda^{(m-1)/m}}^{2\lambda}\dotsb{d}\tau 
  \right\rvert 
  +
  \left\lvert
  \int_{2\lambda}^\infty\dotsb{d}\tau 
  \right\rvert 
\nonumber
\\
& =
  I_4+I_5+I_6+I_7.
\label{equation:e53}
\end{align}
It is easy to evaluate $I_4$, $I_6$ and $I_7$. 
Since 
$\lambda-\tau\geqslant\tau^{(m-1)/m}$ 
for 
$0\leqslant\tau\leqslant\lambda-\lambda^{(m-1)/m}$, 
and 
$\tau-\lambda\geqslant(\tau/2)^{(m-1)/m}$ 
for 
$\leqslant\lambda+\lambda^{(m-1)/m}\leqslant\tau\leqslant2\lambda$, 
$$
\frac{\lvert\lambda-\tau\rvert}{(\lambda-\tau)^2+\eta^2}
\leqslant
\frac{1}{\lvert\lambda-\tau\rvert}
\leqslant
\left(\frac{2}{\tau}\right)^{(m-1)/m}
$$
for 
$0\leqslant\tau\leqslant\lambda-\lambda^{(m-1)/m}$ 
and 
$\leqslant\lambda+\lambda^{(m-1)/m}\leqslant\tau\leqslant2\lambda$. 
Hence, we deduce 
\begin{align}
  I_4, I_6 
& \leqslant
  C
  \int_0^\infty
  \frac{1}{\tau^{(m-1)/m}}
  \int_{p(\xi)=\tau}
  \lvert\hat{f}(\xi)\rvert^2
  d\sigma(\xi)
  d\tau
\nonumber
\\
& =
  C
  \int_{\mathbb{R}^n}
  \frac{\lvert{p^\prime(\xi)}\rvert}{p(\xi)^{(m-1)/m}}
  \lvert\hat{f}(\xi)\rvert^2
  d\xi
\nonumber
\\
& \leqslant
  C
  \lVert{f}\rVert_{L^2(\mathbb{R}^n)}^2.
\label{equation:e54}
\end{align}
Since $\tau-\lambda\geqslant\tau/2$ for $\tau\geqslant2\lambda$, 
we can get 
\begin{equation}
I_7
\leqslant
\Bigl\lVert\lvert{x}\rvert^{1/2}f\Bigr\rVert^2_{L^2(\mathbb{R}^n)} 
\label{equation:e56}
\end{equation}
in the same way as \eqref{equation:e44}. 
\par
$I_5$ is also delicate.  
Since 
$$
\int_{\lambda-\lambda^{(m-1)/m}}^{\lambda+\lambda^{(m-1)/m}}
\frac{\lambda-\tau}{(\lambda-\tau)^2+\eta^2}
d\tau 
=
-
\int_{-\lambda^{(m-1)/m}}^{\lambda^{(m-1)/m}}
\frac{\mu}{\mu^2+\eta^2}
d\mu 
=
0, 
$$
we have 
$$
I_5
=
\left\lvert
\int_{\lambda-\lambda^{(m-1)/m}}^{\lambda+\lambda^{(m-1)/m}}
\frac{\lambda-\tau}{(\lambda-\tau)^2+\eta^2}
F_1(\tau,\lambda)
\tau 
\right\rvert.
$$
Here we remark that for $\lambda>2^m$ 
$$
(\lambda-\lambda^{(m-1)/m})-\frac{\lambda}{2}
=
\lambda^{(m-1)/m}
\left(\frac{\lambda^{1/m}}{2}-1\right)
>
0.
$$
Hence \eqref{equation:e50} is valid also for 
$\lambda>2^m$ and $\lvert\lambda-\tau\rvert\leqslant\lambda^{(m-1)/m}$, 
and 
\eqref{equation:e47} also holds for 
$\lambda>2^m$ and $\lvert\lambda-\tau\rvert\leqslant\lambda^{(m-1)/m}$. 
Thus, we can deduce 
\begin{equation}
I_5
\leqslant
C
\lambda^{-\theta(m-1)/m}
\lVert\langle{x}\rangle^\delta{f}\rVert_{L^2(\mathbb{R}^n)}^2
\int_0^{\lambda^{(m-1)/m}}
\mu^{\theta-1}
d\mu
=
\frac{C}{\theta}
\lVert\langle{x}\rangle^\delta{f}\rVert_{L^2(\mathbb{R}^n)}^2.
\label{equation:e59}
\end{equation}
\par
Combining \eqref{equation:e53}, \eqref{equation:e54}, 
\eqref{equation:e56} and \eqref{equation:e59}, 
we obtain \eqref{equation:resolvent4} for $f=g$ and $\re{\zeta}>2^m$. 
This completes the proof. 
\end{proof}
%
%
%%%%%%
%%%%%%
%%%%%%
%%%%%%
%%%%%%
%%%%%%
\section{Low frequency estimates}
\label{section:lowfrequency}
In this section we prove \eqref{equation:resolvent2}. 
In view of \eqref{equation:nanako}, 
it suffices to show \eqref{equation:resolvent2} only for $f=g$. 
We first show the estimates essentially 
related with the low frequency part. 
\begin{lemma}
\label{theorem:resolvent-low-frequency} 
Let $n\geqslant2$. 
Suppose $1<m<n$. Then 
\begin{equation}
\sup_{\zeta\in\mathbb{C}\setminus\mathbb{R}}
\Bigl\lvert
\Bigl(
(\zeta-p(D_x))^{-1}f,g
\Bigr)_{L^2(\mathbb{R}^n)}
\Bigr\rvert
\leqslant
C
\lVert
\langle{x}\rangle^{m/2}f
\rVert_{L^2(\mathbb{R}^n)}
\lVert
\langle{x}\rangle^{m/2}g
\rVert_{L^2(\mathbb{R}^n)}.
\label{equation:resolvent3} 
\end{equation}
\end{lemma}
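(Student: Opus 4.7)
The plan is to adapt the proof of Lemma~\ref{theorem:lem23}. By the polarization identity~\eqref{equation:nanako} we may assume $f=g$, and write $\zeta=\lambda\pm i\eta$ with $\lambda\in\mathbb{R}$ and $\eta>0$. Plancherel and the co-area formula then give
\[
\Bigl((\lambda\pm i\eta-p(D_x))^{-1}f,f\Bigr)_{L^2(\mathbb{R}^n)}
=\int_0^\infty\frac{G(\tau)}{(\lambda-\tau)\pm i\eta}\,d\tau,
\qquad
G(\tau):=\int_{\Sigma(\tau^{1/m})}\frac{\lvert\hat f(\xi)\rvert^2}{\lvert p'(\xi)\rvert}\,d\sigma(\xi).
\]
The crucial difference from Lemma~\ref{theorem:lem23} is that no $\lvert p'(D_x)\rvert$ multiplier is present to cancel the Jacobian $\lvert p'\rvert^{-1}$ arising from co-area: using $\lvert p'(\tau^{1/m}\omega)\rvert=m\tau^{(m-1)/m}\lvert a'(\omega)\rvert$, the quantity $G(\tau)$ carries an extra $\tau^{-(m-1)/m}$ factor singular at the origin.

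The key point is that this singularity is precisely absorbed by the low-frequency trace estimate~\eqref{equation:low-trace} applied with $\theta=(m-1)/2$, whose admissibility $\theta<(n-1)/2$ is exactly the hypothesis $m<n$. Combined with the uniform trace estimate~\eqref{equation:trace} for $\tau\ge 1$, this yields $G(\tau)\le C\lVert\langle x\rangle^{m/2}f\rVert_{L^2}^2$ uniformly in $\tau\ge 0$. The imaginary part of the resolvent pairing is then controlled at once by $\int_{\mathbb{R}}\eta/((\lambda-\tau)^2+\eta^2)\,d\tau=\pi$.

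For the real part I would follow the three-case split of Lemma~\ref{theorem:lem23}: $\lambda\le 0$, $0<\lambda\le 2^m$, and $\lambda>2^m$. In Case~1 and the outer subintervals of the other two cases, the pointwise bound $\lvert\lambda-\tau\rvert^{-1}\le C/\tau$ or $\le C\tau^{-(m-1)/m}$ reduces the estimate to an integral $\int_0^\infty G(\tau)\tau^{-\alpha}d\tau=\int_{\mathbb{R}^n} a(\xi)^{-m\alpha}\lvert\hat f(\xi)\rvert^2 d\xi$ with $\alpha\in\{(m-1)/m,\,1\}$, dominated by $\lVert\lvert x\rvert^{m/2}f\rVert_{L^2}^2$ via~\eqref{equation:sw2}; the choice $\beta=m/2<n/2$ is admissible precisely because $m<n$. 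In the central subintervals around $\tau=\lambda$, one exploits the vanishing $\int_{-R}^{R}\mu/(\mu^2+\eta^2)\,d\mu=0$ (with $R=\lambda/2$ or $\lambda^{(m-1)/m}$) to replace $G(\tau)$ by $G(\tau)-G(\lambda)$ in the numerator.

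The main obstacle is establishing a suitable H\"older-type estimate on $G(\tau)-G(\lambda)$ in these central subintervals. Setting $\tau'=\tau^{1/m}$, $\lambda'=\lambda^{1/m}$, and $H(\mu,\omega)=\mu^{(n-1)/2}\hat f(\mu\omega)$, one has $G(\tau)=\frac{1}{m}(\tau')^{1-m}J(\tau')$ with $J(\mu):=\int_{\Sigma(1)}\lvert H(\mu,\omega)\rvert^2\lvert a'(\omega)\rvert^{-1}d\sigma$, and the decomposition
\[
G(\tau)-G(\lambda)=\frac{1}{m}(\tau')^{1-m}\bigl(J(\tau')-J(\lambda')\bigr)+\frac{1}{m}\bigl((\tau')^{1-m}-(\lambda')^{1-m}\bigr)J(\lambda')
\]
isolates the two sources of modulus-of-continuity. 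Applying~\eqref{equation:hoelder} with an admissible $\theta$ compatible with both the constraint from Lemma~\ref{theorem:restriction} and the weight $\langle x\rangle^{m/2}$ to $J(\tau')-J(\lambda')$, using the low-frequency or uniform trace bound on $\lVert H(\mu)\rVert_{L^2(\Sigma(1))}$ to control $J(\lambda')$, and invoking the mean-value inequality $\lvert\tau^{1/m}-\lambda^{1/m}\rvert\le C\lambda^{-(m-1)/m}\lvert\tau-\lambda\rvert$ (cf.~\eqref{equation:e50}) on the regime $\tau\sim\lambda$, one obtains a bound of the form $\lvert G(\tau)-G(\lambda)\rvert\le C\lambda^{-\theta}\lvert\tau-\lambda\rvert^\theta\lVert\langle x\rangle^{m/2}f\rVert_{L^2}^2$. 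The $\lambda$-factors cancel after the subsequent integration of $\lvert\tau-\lambda\rvert^{\theta-1}$ over the symmetric subinterval of length comparable to $\lambda$ (respectively $\lambda^{(m-1)/m}$ in Case~3), producing a bound uniform in $\lambda$ and $\eta$. The delicate bookkeeping of these exponents, powered by the extra decay afforded by the low-frequency trace~\eqref{equation:low-trace}, is the heart of the argument.
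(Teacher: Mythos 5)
Your overall architecture coincides with the paper's: polarization, Plancherel plus the co-area formula, the low-frequency trace estimate for the imaginary part (your direct application of \eqref{equation:low-trace} with $\theta=(m-1)/2$, admissible precisely because $m<n$, is in fact slightly cleaner than the paper's version, which uses a small $\theta$ and shifts the rest onto a Stein--Weiss bound for $\lvert D_x\rvert^{-(m-1)/2+\theta}f$), and, for the real part, outer regions handled by $\lvert\lambda-\tau\rvert^{-1}\lesssim\tau^{-1}$ together with \eqref{equation:sw2} for $\beta=m/2<n/2$, plus a central region handled by the vanishing of $\int\mu(\mu^2+\eta^2)^{-1}d\mu$. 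The extra case $\lambda>2^m$ you import from Lemma~\ref{theorem:lem23} is unnecessary here: because the Jacobian $\lvert p'\rvert^{-1}$ is present, the paper uses the single split $[0,\lambda/2]\cup[\lambda/2,3\lambda/2]\cup[3\lambda/2,\infty)$ for all $\lambda>0$.

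There is, however, a genuine gap in the central region. You apply the H\"older estimate \eqref{equation:hoelder} and the trace estimates directly to $H(\mu,\omega)=\mu^{(n-1)/2}\hat f(\mu\omega)$, i.e.\ to $f$ itself. Tracking powers of $\lambda$ in your first term: $(\tau')^{1-m}\lvert J(\tau')-J(\lambda')\rvert\lesssim\lambda^{-(m-1)/m}\cdot\lambda^{-\theta(m-1)/m}\lvert\tau-\lambda\rvert^{\theta}\cdot\lambda^{\theta'/m}\,\lVert\langle x\rangle^{1/2+\theta}f\rVert\,\lVert\langle x\rangle^{1/2+\theta'}f\rVert$, and after integrating $\lvert\tau-\lambda\rvert^{\theta-1}$ over an interval of length comparable to $\lambda$ the total power of $\lambda$ is $(\theta+\theta'-(m-1))/m$. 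Uniformity over all $\lambda\in(0,\infty)$ forces $\theta+\theta'=m-1$, while the target weight $\langle x\rangle^{m/2}$ forces $\theta\leqslant(m-1)/2$ and $\theta'\leqslant(m-1)/2$; hence $\theta=\theta'=(m-1)/2$, which violates the hypothesis $\theta<1$ of \eqref{equation:hoelder} as soon as $m\geqslant3$, so your bookkeeping cannot close for $n\geqslant4$ and $3\leqslant m<n$ (it happens to work for $n=2,3$). The missing idea is the paper's substitution $\hat g(\xi)=a(\xi)^{\theta}\hat f(\xi)/\lvert p'(\xi)\rvert^{1/2}$ with a \emph{small} $\theta\in(0,\min\{1/2,(m-1)/2\})$: only the mild factor $\tau^{-2\theta/m}$ is pulled out of the surface integral, the remaining $(m-1)/2-\theta$ powers of the Jacobian are absorbed into $g$, and the bound $\lVert\langle x\rangle^{1/2+\theta}g\rVert\leqslant C\lVert\langle x\rangle^{m/2}f\rVert$ (Stein--Weiss with $\gamma=m/2<n/2$ together with \eqref{equation:com2}) allows the H\"older/trace argument to run on $g$ with this small, always admissible exponent. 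Your second term, involving $(\tau')^{1-m}-(\lambda')^{1-m}$, is fine as written and matches the paper's $F_3$.
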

\begin{proof}
In view of \eqref{equation:nanako}, 
we have only to show \eqref{equation:resolvent3} for $f=g$. 
Set $\zeta=\lambda\pm{i}\eta$ for $\lambda\in\mathbb{R}$ and $\eta>0$. 
In the same way as \eqref{equation:e42}, we have 
\begin{equation}
\Bigl(
(\zeta-p(D_x))^{-1}f,f
\Bigr)_{L^2(\mathbb{R}^n)}
=
  \int_0^\infty
  \frac{(\lambda-\tau)\mp{i}\eta}{(\lambda-\tau)^2+\eta^2}
  \int_{p(\xi)=\tau}
  \frac{\lvert\hat{f}(\xi)\rvert^2}{\lvert{p^\prime(\xi)}\rvert}
  d\sigma(\xi)
  d\tau. 
\label{equation:e60}
\end{equation}
\par
First we evaluate the imaginary part of \eqref{equation:e60}. 
Pick up $\theta\in(0,\min\{1/2,(m-1)/2\})$. 
Using \eqref{equation:low-trace}, we deduce 
\begin{align}
& \left\lvert
  \im
  \Bigl(
  (\lambda\pm{i}\eta-p(D_x))^{-1}f,f
  \Bigr)_{L^2(\mathbb{R}^n)}
  \right\rvert
\nonumber
\\
& =
  \int_0^\infty
  \frac{\eta}{(\lambda-\tau)^2+\eta^2}
  \int_{p(\xi)=\tau}
  \frac{\lvert\hat{f}(\xi)\rvert^2}{\lvert{p^\prime(\xi)}\rvert}
  d\sigma(\xi)
  d\tau
\nonumber
\\
& =
  \int_0^\infty
  \frac{\eta\tau^{-2\theta/m}}{(\lambda-\tau)^2+\eta^2}
  \int_{p(\xi)=\tau}
  \frac{\lvert\hat{f}(\xi)\rvert^2}{\lvert{p^\prime(\xi)}\rvert{a(\xi)^{-2\theta}}}
  d\sigma(\xi)
  d\tau
\nonumber
\\
& \leqslant
  C
  \int_0^\infty
  \frac{\eta\tau^{-2\theta/m}}{(\lambda-\tau)^2+\eta^2}
  \Bigl\lVert
  \lvert\xi\rvert^{-(m-1)/2+\theta}\hat{f}
  \Bigr\rVert^2_{L^2(\Sigma(\tau^{1/m}))}
  d\tau
\nonumber
\\
& \leqslant
  C
  \lVert
  \langle{x}\rangle^{1/2+\theta}
  \lvert{D_x}\rvert^{-(m-1)/2+\theta}f
  \rVert_{L^2(\mathbb{R}^n)}^2
  \int_0^\infty
  \frac{\eta}{(\lambda-\tau)^2+\eta^2}
  d\tau
\nonumber
\\
& \leqslant
  C
  \lVert
  \langle{x}\rangle^{1/2+\theta}
  \lvert{D_x}\rvert^{-(m-1)/2+\theta}f
  \rVert_{L^2(\mathbb{R}^n)}^2
  \int_{\mathbb{R}}
  \frac{\eta}{(\lambda-\tau)^2+\eta^2}
  d\tau
\nonumber
\\
& =
  C\pi
  \lVert
  \langle{x}\rangle^{1/2+\theta}
  \lvert{D_x}\rvert^{-(m-1)/2+\theta}f
  \rVert_{L^2(\mathbb{R}^n)}^2.
\label{equation:e61}
\end{align}
Here we remark that $0<(m-1)/2-\theta<m/2<n/2$. 
Using \eqref{equation:sw1} with 
$(\alpha,\beta,\gamma)=((m-1)/2-\theta,0,(m-1)/2-\theta)$ 
and 
$(\alpha,\beta,\gamma)=((m-1)/2-\theta,-1/2-\theta,m/2)$, 
we have 
\begin{align}
& \lVert
  \langle{x}\rangle^{1/2+\theta}
  \lvert{D_x}\rvert^{-(m-1)/2+\theta}f
  \rVert_{L^2(\mathbb{R}^n)} 
\nonumber
\\
& \leqslant
  C
  \Bigl\lVert
  \lvert{D_x}\rvert^{-(m-1)/2+\theta}f
  \Bigr\rVert_{L^2(\mathbb{R}^n)} 
  +
  C
  \Bigl\lVert
  \lvert{x}\rvert^{1/2+\theta}
  \lvert{D_x}\rvert^{-(m-1)/2+\theta}f
  \Bigr\rVert_{L^2(\mathbb{R}^n)} 
\nonumber
\\
& \leqslant
  C
  \Bigl\lVert
  \lvert{x}\rvert^{(m-1)/2-\theta}f
  \Bigr\rVert_{L^2(\mathbb{R}^n)} 
  +
  C
  \Bigl\lVert
  \lvert{x}\rvert^{m/2}f
  \Bigr\rVert_{L^2(\mathbb{R}^n)} 
\nonumber
\\
& \leqslant
  C
  \lVert
  \langle{x}\rangle^{m/2}f
  \rVert_{L^2(\mathbb{R}^n)}. 
\label{equation:e62}
\end{align}
Substituting \eqref{equation:e62} into \eqref{equation:e61}, we obtain 
\begin{equation}
\left\lvert
\im
\Bigl(
(\lambda\pm{i}\eta-p(D_x))^{-1}f,f
\Bigr)_{L^2(\mathbb{R}^n)}
\right\rvert
\leqslant
C
\lVert
\langle{x}\rangle^{m/2}f
\rVert_{L^2(\mathbb{R}^n)}^2. 
\label{equation:e63}
\end{equation}
\par
Next we consider the real part of \eqref{equation:e60}
\begin{align}
& \re
  \Bigl(
  (\lambda\pm{i}\eta-p(D_x))^{-1}f,f
  \Bigr)_{L^2(\mathbb{R}^n)}
\nonumber
\\
& =
  \int_0^\infty
  \frac{\lambda-\tau}{(\lambda-\tau)^2+\eta^2}
  \int_{p(\xi)=\tau}
  \frac{\lvert\hat{f}(\xi)\rvert^2}{\lvert{p^\prime(\xi)}\rvert}
  d\sigma(\xi)
  d\tau. 
\label{equation:e64}
\end{align}
When $\lambda\leqslant0$, set $\mu=-\lambda\geqslant0$. 
Using \eqref{equation:sw2} with $\beta=m/2$, we deduce 
\begin{align}
& \left\lvert
  \re
  \Bigl(
  (\lambda\pm{i}\eta-p(D_x))^{-1}f,f
  \Bigr)_{L^2(\mathbb{R}^n)}
  \right\rvert
\nonumber
\\
& =
  \int_0^\infty
  \frac{\mu+\tau}{(\mu+\tau)^2+\eta^2}
  \int_{p(\xi)=\tau}
  \frac{\lvert\hat{f}(\xi)\rvert^2}{\lvert{p^\prime(\xi)}\rvert}
  d\sigma(\xi)
  d\tau
\nonumber
\\
& \leqslant
  \int_0^\infty
  \frac{1}{\mu+\tau}
  \int_{p(\xi)=\tau}
  \frac{\lvert\hat{f}(\xi)\rvert^2}{\lvert{p^\prime(\xi)}\rvert}
  d\sigma(\xi)
  d\tau
\nonumber
\\
& \leqslant
  \int_0^\infty
  \frac{1}{\tau}
  \int_{p(\xi)=\tau}
  \frac{\lvert\hat{f}(\xi)\rvert^2}{\lvert{p^\prime(\xi)}\rvert}
  d\sigma(\xi)
  d\tau
\nonumber
\\
& =
  \int_{\mathbb{R}^n}
  p(\xi)^{-1}\lvert\hat{f}(\xi)\rvert^2
  d\xi
\nonumber
\\
& =
  \lVert
  a(\xi)^{-m/2}\hat{f}
  \rVert_{L^2(\mathbb{R}^n)}^2
\nonumber
\\
& \leqslant
  C
  \Bigl\lVert
  \lvert{x}\rvert^{m/2}f
  \Bigr\rVert_{L^2(\mathbb{R}^n)}^2
\label{equation:e65}
\end{align}
for $\lambda\leqslant0$ and $\eta>0$. 
\par
For $\lambda>0$, we split \eqref{equation:e64} into three parts 
\begin{align}
& \left\lvert
  \re
  \Bigl(
  (\lambda\pm{i}\eta-p(D_x))^{-1}f,f
  \Bigr)_{L^2(\mathbb{R}^n)}
  \right\rvert
\nonumber
\\
& =
  \left\lvert
  \int_0^\infty
  \frac{(\lambda-\tau)}{(\lambda-\tau)^2+\eta^2}
  \int_{p(\xi)=\tau}
  \frac{\lvert\hat{f}(\xi)\rvert^2}{\lvert{p^\prime(\xi)}\rvert}
  d\sigma(\xi)
  d\tau. 
  \right\rvert
\nonumber
\\
& \leqslant
  \left\lvert
  \int_0^{\lambda/2}\dotsb{d}\tau
  \right\rvert
  +
  \leqslant
  \left\lvert
  \int_{\lambda/2}^{3\lambda/2}\dotsb{d}\tau
  \right\rvert
  +
  \leqslant
  \left\lvert
  \int_{3\lambda/2}^\infty\dotsb{d}\tau
  \right\rvert
\nonumber
\\
& =
  I_8+I_9+I_{10}.
\label{equation:e66}
\end{align}
It is easy to handle $I_8$ and $I_{10}$. 
In fact, since 
$\lambda-\tau\geqslant\tau$ for $0\leqslant\tau\leqslant\lambda/2$ 
and 
$\tau-\lambda\geqslant\tau/3$ for $\tau\geqslant3\lambda/2$, 
$$
\frac{\lvert\lambda-\tau\rvert}{(\lambda-\tau)^2+\eta^2}
\leqslant
\frac{3}{\tau}
$$
for $\tau\not\in(\lambda/2,3\lambda/2)$. 
Hence we can obtain 
\begin{equation}
I_8, I_{10} 
\leqslant
C
\Bigl\lVert
\lvert{x}\rvert^{m/2}f
\Bigr\rVert_{L^2(\mathbb{R}^n)}^2
\end{equation}
for $\lambda>0$ in the same way as \eqref{equation:e65}. 
\par
We need to deal with $I_9$ carefully. 
Pick up $\theta\in(0,\min\{1/2,(m-1)/2\})$. 
Using \eqref{equation:vanishing}, we have 
\begin{equation}
I_9
=
\left\lvert
\int_{\lambda/2}^{3\lambda/2}
\frac{\lambda-\tau}{(\lambda-\tau)^2+\eta^2}
F_2(\tau,\lambda)
d\tau
\right\rvert,
\label{equation:e69} 
\end{equation}
\begin{align}
  F_2(\tau,\lambda)
& =
  \int_{p(\xi)=\tau}
  \frac{\lvert\hat{f}(\xi)\rvert^2}{\lvert{p^\prime(\xi)}\rvert}
  d\sigma(\xi)
  -
  \int_{p(\xi)=\lambda}
  \frac{\lvert\hat{f}(\xi)\rvert^2}{\lvert{p^\prime(\xi)}\rvert}
  d\sigma(\xi)
\nonumber
\\
& =
  \tau^{-2\theta/m}
  \int_{p(\xi)=\tau}
  \left\lvert
  \frac{\hat{f}(\xi)}{\lvert{p^\prime(\xi)}\rvert^{1/2}a(\xi)^{-\theta}}
  \right\rvert^2
  d\sigma(\xi)
\nonumber
\\  
& -
  \lambda^{-2\theta/m}
  \int_{p(\xi)=\lambda}
  \left\lvert
  \frac{\hat{f}(\xi)}{\lvert{p^\prime(\xi)}\rvert^{1/2}a(\xi)^{-\theta}}
  \right\rvert^2
  d\sigma(\xi)
\nonumber
\\
& =
  (\tau^{-2\theta/m}-\lambda^{-2\theta/m})
  \int_{p(\xi)=\tau}
  \left\lvert
  \frac{\hat{f}(\xi)}{\lvert{p^\prime(\xi)}\rvert^{1/2}a(\xi)^{-\theta}}
  \right\rvert^2
  d\sigma(\xi)
\nonumber
\\
& +
  \lambda^{-2\theta/m}
  \biggl(
  \int_{p(\xi)=\tau}
  \left\lvert
  \frac{\hat{f}(\xi)}{\lvert{p^\prime(\xi)}\rvert^{1/2}a(\xi)^{-\theta}}
  \right\rvert^2
  d\sigma(\xi)
\nonumber
\\
& \qquad \qquad
  -
  \int_{p(\xi)=\lambda}
  \left\lvert
  \frac{\hat{f}(\xi)}{\lvert{p^\prime(\xi)}\rvert^{1/2}a(\xi)^{-\theta}}
  \right\rvert^2
  d\sigma(\xi)
  \biggr)
\nonumber
\\
& =
  F_3(\tau,\lambda)+F_4(\tau,\lambda).
\label{equation:e70}
\end{align}
In the same way as \eqref{equation:e50}, we have 
\begin{equation}
\lvert\tau^{-2\theta/m}-\lambda^{-2\theta/m}\rvert 
\leqslant
C
\lambda^{-2\theta/m-1}
\lvert\tau-\lambda\rvert
\label{equation:e71}
\end{equation}
for $\lambda/2\leqslant\tau\leqslant3\lambda/2$. 
In the computation of \eqref{equation:e61} and \eqref{equation:e62}, 
we have obtained 
\begin{equation}
\int_{p(\xi)=\tau}
\left\lvert
\frac{\hat{f}(\xi)}{\lvert{p^\prime(\xi)}\rvert^{1/2}a(\xi)^{-\theta}}
\right\rvert^2
d\sigma(\xi)
\leqslant
C
\tau^{2\theta/m}
\lVert
\langle{x}\rangle^{m/2}f
\rVert_{L^2(\mathbb{R}^n)}^2. 
\label{equation:e72}
\end{equation}
Using \eqref{equation:e71} and \eqref{equation:e72}, we obtain 
\begin{equation}
\lvert{F_3(\tau,\lambda)}\rvert
\leqslant
C\lambda^{-1}\lvert\tau-\lambda\rvert
\label{equation:e73}
\end{equation}
for $\lambda/2\leqslant\tau\leqslant3\lambda/2$. 
Set 
$\hat{g}(\xi)=\hat{f}(\xi)/\lvert{p^\prime(\xi)}\rvert^{1/2}a(\xi)^{-\theta}$ 
for short. 
We remark that 
\eqref{equation:com2} and \eqref{equation:e62} show that for any $\tau>0$ 
\begin{equation}
\lVert
\langle{x}\rangle^{1/2+\theta}g
\rVert_{L^2(\mathbb{R}^n)}
\leqslant
C
\lVert
\langle{x}\rangle^{m/2}f
\rVert_{L^2(\mathbb{R}^n)}. 
\label{equation:help}
\end{equation}
Applying 
\eqref{equation:hoelder}, 
\eqref{equation:low-trace}, 
\eqref{equation:help} 
and 
\eqref{equation:e50}
to $F_4(\tau,\lambda)$, 
we deduce 
\begin{align}
  \lvert{F_4(\tau,\lambda)}\rvert
& =
  \lambda^{-2\theta/m}
  \Bigl\lvert
  \lVert\tau^{\rho/m}\hat{g}(\tau\cdot)\rVert_{L^2(\Sigma(1))}
  -
  \lVert\lambda^{\rho/m}\hat{g}(\lambda\cdot)\rVert_{L^2(\Sigma(1))}
  \Bigr\rvert
\nonumber
\\
& \qquad
  \times
  \Bigl(
  \lVert\hat{g}\rVert_{L^2(\Sigma(\tau^{1/m}))}
  +
  \lVert\hat{g}\rVert_{L^2(\Sigma(\lambda^{1/m}))}
  \Bigr)
\nonumber
\\
& \leqslant
  C
  \lambda^{-\theta/m}
  \lvert\tau^{1/m}-\lambda^{1/m}\rvert^\theta
  \lVert\langle{x}\rangle^{1/2+\theta}g\rVert_{L^2(\mathbb{R}^n)}^2
\nonumber
\\
& \leqslant
  C
  \lambda^{-\theta/m}
  \lvert\tau^{1/m}-\lambda^{1/m}\rvert^\theta
  \lVert\langle{x}\rangle^{m/2}f\rVert_{L^2(\mathbb{R}^n)}^2
\nonumber
\\
& \leqslant
  C
  \lambda^{-\theta}
  \lvert\tau-\lambda\rvert^\theta
  \lVert\langle{x}\rangle^{m/2}f\rVert_{L^2(\mathbb{R}^n)}^2
\label{equation:e74}
\end{align}
for $\lambda/2\leqslant\tau\leqslant3\lambda/2$. 
Combining \eqref{equation:e70}, \eqref{equation:e73} and \eqref{equation:e74}, 
we have 
\begin{equation}
\lvert{F_2(\tau,\lambda)}\rvert
\leqslant
C
(
\lambda^{-1}
\lvert\tau-\lambda\rvert
+
\lambda^{-\theta}
\lvert\tau-\lambda\rvert^\theta 
)
\lVert\langle{x}\rangle^{m/2}f\rVert_{L^2(\mathbb{R}^n)}^2
\label{equation:e78}
\end{equation}
for $\lambda/2\leqslant\tau\leqslant3\lambda/2$. 
Substituting \eqref{equation:e78} into \eqref{equation:e69}, we have 
\begin{align}
  I_9
& \leqslant
  C
  \lVert\langle{x}\rangle^{m/2}f\rVert_{L^2(\mathbb{R}^n)}^2
  \int_{\lambda/2}^{3\lambda/2}
  (\lambda^{-1}+\lambda^{-\theta}\lvert\tau-\lambda\rvert^{\theta-1})
  d\tau
\nonumber
\\
& =
  C
  \left(1+\frac{2^{1-\theta}}{\theta}\right)
  \lVert\langle{x}\rangle^{m/2}f\rVert_{L^2(\mathbb{R}^n)}^2
\label{equation:e79}
\end{align}
for $\lambda>0$. 
Combining 
\eqref{equation:e63}, 
\eqref{equation:e65} 
and  
\eqref{equation:e79}, 
we obtain \eqref{equation:resolvent3}. 
\end{proof}
Finally, we complete the proof of \eqref{equation:resolvent2}. 
\begin{proof}[Proof of \eqref{equation:resolvent2}] 
We make a little use of the elementary theory of 
pseudodifferential operators freely. 
See, e.g., \cite{kumano-go}. 
Pick up $\chi(\xi){\in}C^\infty(\mathbb{R}^n)$ satisfying 
$$
0\leqslant\chi(\xi)\leqslant1, 
\quad
\chi(\xi)
=
\begin{cases}
1 & (\lvert\xi\rvert\leqslant1),
\\
0 & (\lvert\xi\rvert\geqslant2).
\end{cases}
$$
We split $\langle\xi\rangle^{m-1}$ into two parts
\begin{equation}
\langle\xi\rangle^{m-1}
=
b_1(\xi)
+
\lvert\xi\rvert^{m-1}b_2(\xi), 
\label{equation:decomposition}
\end{equation}
$$
b_1(\xi)
=
\langle\xi\rangle^{m-1}\chi(\xi), 
\quad
b_2(\xi)
=
\frac{\langle\xi\rangle^{m-1}(1-\chi(\xi))}{\lvert\xi\rvert^{m-1}}.
$$
Here we remark that 
$b_1(\xi)$ and $b_2(\xi)$ are smooth functions on $\mathbb{R}^n$ 
whose derivatives of any order are all bounded. 
Using 
\eqref{equation:decomposition}, 
\eqref{equation:resolvent3} 
and 
\eqref{equation:resolvent1}, 
we deduce that 
\begin{align*}
& \Bigl\lvert
  \Bigl(
  \langle{D_x}\rangle^{m-1}(\zeta-p(D_x))^{-1}f,g
  \Bigr)_{L^2(\mathbb{R}^n)}
  \Bigr\rvert
\\
& \leqslant
  \Bigl\lvert
  \Bigl(
  (\zeta-p(D_x))^{-1}b_1(D_x)f,g
  \Bigr)_{L^2(\mathbb{R}^n)}
  \Bigr\rvert
\\
& +
  \Bigl\lvert
  \Bigl(
  \lvert{D_x}\rvert^{m-1}(\zeta-p(D_x))^{-1}b_2(D_x)f,g
  \Bigr)_{L^2(\mathbb{R}^n)}
  \Bigr\rvert
\\
& \leqslant
  C
  \lVert
  \langle{x}\rangle^{m/2}b_1(D_x)f
  \rVert_{L^2(\mathbb{R}^n)}
  \lVert
  \langle{x}\rangle^{m/2}g
  \rVert_{L^2(\mathbb{R}^n)}
\\
& +
  C
  \lVert
  \langle{x}\rangle^{m/2}b_2(D_x)f
  \rVert_{L^2(\mathbb{R}^n)}
  \lVert
  \langle{x}\rangle^{m/2}g
  \rVert_{L^2(\mathbb{R}^n)}. 
\end{align*}
Since 
$\langle{x}\rangle^{m/2}b_j(D_x)\langle{x}\rangle^{-m/2}$ ($j=1,2$) 
are $L^2$-bounded operators, 
we obtain \eqref{equation:resolvent2}. 
\end{proof}
%
%
%%%%%%
%%%%%%
%%%%%%
%%%%%%
%%%%%%
%%%%%%

%%%%%%
%%%%%%
%%%%%%

%%%%%%
%%%%%% End
%%%%%%

\begin{thebibliography}{99}
\bibitem{chihara}H.~Chihara, 
{\it Smoothing effects of dispersive pseudodifferential equations}, 
Comm.\ Partial Differential Equations 
{\bf 27} (2002), 1953--2005.
\bibitem{doi}S.-I.~Doi, 
{\it Smoothing effects of Schr\"odinger evolution groups on Riemannian manifolds}, 
Duke Math.\ J. {\bf 82} (1996), 679--706.
\bibitem{hoermander}L.~H\"ormander, 
``The analysis of linear partial differential operators II'', 
Springer-Verlag, 1983.
\bibitem{hoshiro1}T.~Hoshiro, 
{\it On weighted $L^2$-estimates of solutions to wave equation}, 
J. Anal.\ Math.\ {\bf 72} (1997), 127--140.
\bibitem{hoshiro2}T.~Hoshiro, 
{\it Mourre's method and smoothing properties of dispersive equations}, 
Comm.\ Math.\ Phys.\ {\bf 202} (1999), 255--265.
\bibitem{hoshiro3}T.~Hoshiro, 
{\it Decay and regularity for dispersive equations with constant coefficients}, 
J.\ Anal.\ Math. {\bf 91} (2003), 211--230.
\bibitem{kato}T.~Kato, 
{\it Wave operators and similarity for some non-selfadjoint operators}, 
Math.\ Ann.\ {\bf 1965/1966}, 258--279.
\bibitem{ky}T.~Kato and K.~Yajima, 
{\it Some examples of smooth operators and the associated smoothing effect}, 
Rev.\ Math.\ Phys.\ {\bf 1} (1989), 481--496.
\bibitem{kpv}C.~E.~Kenig, G.~Ponce and L.~Vega, 
{\it Small solutions to nonlinear Schr\"odinger equations}, 
Ann.\ Inst.\ H. Poincar\'e Anal.\ Non.\ Lin\'eaire 
{\bf 10} (1993), 255--288.
\bibitem{kumano-go}H.~Kumano-go, 
  ``Pseudo-Differential Operators'', 
  The MIT Press, 1981.
\bibitem{kuroda}S.~T.~Kuroda, 
``An Introduction to Scattering Theory'', 
Lecture Notes Series {\bf 51},  
Aarhus Universitet, 
Matematisk Institut, Aarhus, 1978.  
\bibitem{lp}F.~Linares and G.~Ponce, 
{\it On the Davey-Stewartson systems}, 
Ann.\ Inst.\ H. Poincar\'e Anal.\ Non.\ Lin\'eaire 
{\bf 10} (1993), 523--548.
\bibitem{rs}M.~Ruzhansky and M.~Sugimoto, 
{\it A new proof of global smoothing estimates for dispersive equations}, 
Advances in Pseudo-Differential Operators, 65--75, 
Oper.\ Theory Adv.\ Appl.\ {\bf 155}, 
Birkh\"auser, Basel, 2004. 
\bibitem{sakai}T.~Sakai, 
``Riemannian Geometry'', 
Translations of Mathematical Monographs {\bf 149}, 
American Mathematical Society, 
Providence, RI, 1996.
\bibitem{sw}E.~M.~Stein and G.~Weiss, 
{\it Fractional integrals on $n$-dimensional Euclidean space}, 
J. Math. Mech. {\bf 7} (1958), 503--514. 
\bibitem{sugimoto}M.~Sugimoto, 
{\it Global smoothing properties of generalized 
       Schr\"odinger equations}, 
  J. Anal.\ Math.\ {\bf 76} (1998), 191--204.
\bibitem{taylor}M.~E.~Taylor, 
``Partial Differential Equations I, Basic Theory'', 
Springer, 1996.
\bibitem{walther}B.~G.~Walther, 
{\it A sharp weighted $L^2$-estimate for the solution to the time-dependent Schr\"odinger equation}, 
Ark.\ Mat.\ {\bf 37} (1999), 381--393.
\bibitem{watanabe}K.~Watanabe, 
{\it Smooth perturbations of the selfadjoint operator 
$\vert\Delta\vert\sp{\alpha/2}$}, 
Tokyo J. Math.\ {\bf 14} (1991), 239--250.
\end{thebibliography}
\end{document}